\title{Generating Functions for Special Flows over the 1-Step Countable Topological Markov Chains }
\author{ D. Ahmadi Dastjerdi
\\ 
 S. Lamei
\\
\footnotesize{Department of Mathematics, The University of
Guilan, P.O. 1914, Rasht, Iran.
} \\ \footnotesize{e-mail: ahmadi@guilan.ac.ir}\\
\footnotesize{Department of Mathematics,} \footnotesize{The
University
of Guilan, P.O. 1914, Rasht, Iran. }  \\
\footnotesize{e-mail:lamei@guilan.ac.ir  }}
\date{}
\begin{document}
 \maketitle \thispagestyle{empty}
\newtheorem{The}{Theorem}
\newtheorem{lem}{Lemma}
\newtheorem{prop}{Proposition}
\newtheorem{cor}{Corollary}
\newtheorem{hyp}{Hypothesis}
\newtheorem{rem}{Remark}

\pagestyle{headings} \markright{}

\begin{abstract}
Let $Y$ be a topological Markov chain with finite leading and
follower sets. Special   flow over $Y$ whose  height function
depends on the time zero of elements of $Y$ is constructed. Then a
formula for computing the entropy of this flow will be given. As
an application,  we give a lower estimate for the entropy of a
class of geodesic flows on the modular surface. We also give
sufficient conditions to guarantee the existence of a measure with
maximal entropy. \\
\end{abstract}
\maketitle
\section{Introduction}\label{intro}
There are two main routines to compute the entropy of non-compact
dynamical systems. The first is to use $(T,\epsilon)$-spanning
sets introduced by Bowen  on  metric of spaces
\cite{Bow,PP,Thompson}, and the second is to use the topological
pressure from the thermodynamic formalism
\cite{BaIo,Kempton,Marc,S}. Our concern is the latter and in
particular, we consider  special flows over countable Markov
chains. These flows are mainly associated with geodesic flows on
non-compact manifolds with negative curvature. For instance in
\cite{GK}, it has been shown that the geodesic flows on the
modular surface can be represented by special flow over countable
alphabet. However, even in this case, depending on the
properties, several definitions for entropies are given
\cite{BaIo,Kempton,Marc,S}.

In this paper we construct  a special flow constructed from a
certain class of topological Markov chains. Namely, we let the
base $Y$ of the flow be taken from a 1-step topological Bernoulli
scheme TBS with countable states so that the follower and leading
sets are finite. This means that we partition the set of alphabet
to $\{ P_1, ... , P_m\}$ such that if $y=\{y_i \}_{i \in
\mathbb{Z}}$, $y'=\{y_i' \}_{i \in \mathbb{Z}}$ are in $Y$ with
$y_0$, $y_0' \in P_k$ then $y_1, y_1' \in P_i$ and $y_{-1},
y_{-1}' \in P_j$ for some $1\leq i, j , k \leq m$. We call any
$Y$ satisfying this condition \textit{reducible to finite type }
or briefly  RFT.

We let $Y$ to be a 1-step topological Markov chain where this will
able us to define the height function depending only on the zero
coordinate of an element of $y$ where $y=\{y_i\}_{i \in
\mathbb{Z}}$. Our main objective is to find the generating
function $\phi: \mathbb{R} \rightarrow \mathbb{R}$ depending on
the height function. Then by applying some conditions, necessary
for the results in \cite{S}, the topological entropy of the flow
is $-\ln(x_0)$ where $x_0$ is the unique point where $\phi(x_0)
=1$. In section \ref{egs}, we show how this is applied in
application, by giving some examples which arises in the study of
geodesic flows in the modular surface. Then in Section \ref{4},
we show that results in \cite{P} can be deduced from our method.
In Section \ref{crit}, based on results in \cite{P} and \cite{S},
 sufficient conditions  for having a measure with maximal entropy for the flow
has been given.

\textit{Acknowledgments.} We thank S. Savchenko for bringing to
our attention a series which helped us to give the example 3 in
section 3. Also we thank M. Kessebohmer for his useful comments.

\section{Notations and main definitions} Now we recall some notations
and definitions many adopted from \cite{P}. Let $G$ be a
connected directed graph with a countable vertex set $V(G)$ and
edge set $E(G) \subseteq V(G) \times V(G)$. If $E(G) = V(G)
\times V(G)$, then we denote $G$ by $G_0$ which is called a
\textit{complete graph}. A path $\gamma$ with length
$\ell(\gamma)=n$ in $G$ from $v_0$ to $v_n$ is the sequence
$\gamma=(v_0...,v_n)$, $n\geq 1$ of vertices of $G$ such that
$(v_k,v_{k+1})\in E(G)$ for $0\leq k \leq n-1$. A path
$\gamma=(v_0,...,v_n)$, $n\geq 1$ in $G$ is called a
\textit{simple $v$-cycle} if $v_0=v_n=v$ and $v_i \neq v$ for
$1\leq i \leq n-1$. Denote by $C(G;v)$ the set of all simple
$v$-cycles in the graph $G$.

Let $Y(G)=\{(...,y_{i-1}, y_i, y_{i+1}, ...) :\ y_i \in V(G), \
(y_i,y_{i+1})\in E(G), i \in \mathbb{Z} \}$ be the set of
two-sided infinite paths in $G$ and the shift transformation
$T:Y(G) \longrightarrow Y(G) $ is defined as $(Ty)_i= y_{i+1}$,
for $y\in Y(G)$ and $i\in \mathbb{Z}$.  The system ($Y(G), T$) is
called a \textit{countable Topological Markov Chain} TMC. In the
case of complete graph, the dynamical system ($Y(G_0), T$) is
called \textit{countable Topological Bernoulli Scheme} TBS. A TMC
will be a \textit{local perturbation of a countable} TBS if
$D=E(G_0)-E(G)$ is finite.

Let $(Y(G),T)$ be a given TMC. Consider on $Y(G)$ a continuous
positive function $f :Y(G) \longrightarrow (0,\infty)$ such that
$\sum_{k=1}^{\infty}f(T^{k}y)
=\sum_{k=1}^{\infty}f(T^{-k}y)=\infty $, $y\in Y(G)$. The set of
such functions which depend only on zero coordinate $y_0$ of the
sequence $y$ is denoted by $\mathcal{F}^o(Y(G))$. A good
candidate for such $f$ is a special flow. To be more precise, let
$Y_f(G)=\{ (y,u): \ y\in Y(G), \ 0\leq u \leq f(y) \}$ with the
points $(y,f(y))$ and $(Ty,0)$ identified. For $0\leq u, u+t \leq
f(y)$ we let $T^t_f(y,t)=(y,u+t)$. The family $T_f =\{ T^t_f \}$,
$t\in \mathbb{R} $ is a \textit{special flow} constructed over
its base $Y(G)$.

Consider the following series
\begin{eqnarray}\label{F}
F_{f,V}(x)=\sum_{v\in V}x^{f(v)},
 \end{eqnarray}
 which is
defined  for $x\geq 0.$ This series is convergent at zero and we
will call $r(F_{f,V})=\sup \{ x\geq 0: \sum_{v\in V} x^{f(v)} \
\textrm{is convergent} \}$ the \textit{radius of convergent} of
$F_{f,V}.$

 Let $f \in \mathcal{F}^o(Y(G))$.  The \textit{generating function} of simple $v$-cycles with
respect to the special flow $T_f$ constructed over a TMC $(Y(G),
T)$  is defined to be the series
\begin{eqnarray}\label{phi1}
\phi_{G,f,w}(x)= \sum_{\gamma\in C(G;w)}x^{f^*(\gamma)},
\hspace{1cm} x\geq 0
\end{eqnarray}
where $f^*(\gamma)=\sum_{i=0}^{n-1}f(v_i),
\gamma=(v_0,...,v_{n})$. The radius of convergence
$r(\phi_{G,f,w})$ $\in[0,1)$ is defined as
was defined for (\ref{F}).\\
 \section{Computing Generating Function }\label{generating}
Let $V(G)$ and $E(G)$ be as above. Consider a ``weighted''
adjacent matrix $A_{G}=[a_{ij}]$, that is, a matrix where
$a_{ij}=x^{f(v_i)}$ if $(v_i,v_j)\in E(G)$ and zero otherwise.
 For each $v \in V(G)$, let $V_v^+=\{v'
\in V(G): (v,v') \in E(G) \}$ and $V_v^-=\{v' \in V(G): (v',v) \in
E(G) \}$ be the \textit{follower} and \textit{leading} set for $v$
respectively.
 Set $\widetilde{V}=\{v \in V(G):
\exists v'\in V(G)\ni (v,v')\not \in E(G)\ \mathrm{or}\ (v',v)
\not \in E(G) \}$. Note that $\widetilde{V}$ may be infinite or
even equal to $V(G)$. Let $\rho$ be an equivalence relation on
$V(G)$ defined by $v\stackrel{\rho}{\sim}v' \Leftrightarrow
(V_v^+=V_{v'}^+ , V_v^- =V_{v'}^- )$ and $P$ be the associated
partition. We are interested in cases where $|P| < \infty$.

We alter a bit the above notations and will produce a quotient set
for $G$ which is again a connected directed graph. To achieve that
fix $w \in V(G)$ and let $W_{\{w\}} = P \vee \{ \{w\} ,
V(G)-\{w\} \} $ be the set of all non-empty intersections of $P$
with the partition $\{\{w\} , V(G)-\{w\}\}$. Then
$|W_{\{w\}}|<\infty$ and let $V_0 =\{w\}, V_1, \ldots , V_m$ be
the elements of $W_{\{w\}}$.

For $v \in V_i$ define the follower and leading sets for $V_i$ as
$V_{i}^+ = V_v^+$ and $V_{i}^- = V_v^-$ respectively. Note that
any $V_i$, $V_{i}^+$ or $V_{i}^-$ can be written as the union of
some of elements of $W_{\{w\}}$. Therefore, a directed graph $H$
arises with vertex set $W_{\{w\}}$ and the edge set $E(H)= \{
(V_i,V_j): (V_i,V_j) \in W_{\{w\}}\times W_{\{w\}}, (v_i,v_j) \in
E(G), v_i \in V_i, v_j \in V_j \}$. We call $H$ the
\textit{quotient graph} for $G$. Graph $H$ is connected because
$G$ is connected.


 By reindexing the elements of
$W_{\{w\}}$, we may assume $\{ V_1, \ldots,V_k \} = V_{\{w\}}^+$.
Let $H_{\{w\}}$ be a tree with root $\{w\}$ and $V_1$, ..., $V_k$
in its second level.
 We
 wish to extend $H_{\{w\}}$ to a  tree $T_{\{w\}}$ whose first two levels
are exactly $H_{\{w\}}$ and any path starting from $\{w\}$ ends
at $\{w\}$. By this we mean the third level of $T_{\{w\}}$
consists of the follower sets of $V_j$'s, $V_j \neq \{w\}$ and
$1\leq j \leq k$. Again the next level consists of the follower
sets of vertices of third level which are not $\{w\}$ and so on.
\begin{The}
Let $k$ be the number of vertices at the second level of
$T_{\{w\}}$. Then all the elements of $W_{\{w\}}$ appear at the
vertices of $T_{\{w\}}$ at most up to level $m-k+2$.
\end{The}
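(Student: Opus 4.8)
The plan is to translate the statement about the tree $T_{\{w\}}$ into a statement about shortest directed paths in the quotient graph $H$, and then bound the number of breadth-first layers by a pigeonhole count that exploits the size $k$ of the first layer.

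First I would record the precise rule governing when a block appears in $T_{\{w\}}$. By construction a vertex of $T_{\{w\}}$ at level $\ell$ carries the label of a block $V \in W_{\{w\}}$ reached by a directed walk $\{w\} = U_0 \to U_1 \to \cdots \to U_{\ell-1} = V$ in $H$ whose interior vertices $U_1, \ldots, U_{\ell-2}$ all differ from $\{w\}$, since any branch that hits $\{w\}$ terminates there. Conversely every such walk is realized, because each non-$\{w\}$ node is expanded by its full follower set $V^+$, which by the remark preceding the theorem is a union of elements of $W_{\{w\}}$. Hence the \emph{first} level at which $V$ can occur equals $1$ plus the length of a shortest directed path from $\{w\}$ to $V$ whose interior avoids $\{w\}$; and $V$ appears at all iff it is reachable from $\{w\}$, which holds for every block because $H$ is connected.

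Next I would organize these shortest paths by a breadth-first search from $\{w\}$ in $H$, with $\{w\}$ treated as absorbing. Let $L_d$ be the set of blocks whose shortest admissible path has exactly $d$ edges. These sets are pairwise disjoint, each $L_d$ with $d \geq 1$ lies in the $m$ non-root blocks $V_1, \ldots, V_m$ (a shortest path is simple, so no layer repeats a block), and the first layer is exactly $L_1 = V_{\{w\}}^+ = \{V_1, \ldots, V_k\}$, whence $|L_1| = k$. The key structural fact, which I would isolate as a short lemma, is that the nonempty layers form an unbroken initial segment $1, 2, \ldots, d_{\max}$: once a layer is empty the search has exhausted everything reachable. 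Equivalently, writing $A_\ell$ for the set of non-root blocks appearing up to level $\ell$, the monotone operator $S \mapsto \big(\bigcup_{V \in S} V^+\big) \setminus \{\{w\}\}$ shows that if $A_{\ell+1} = A_\ell$ then $A_{\ell'} = A_\ell$ for all $\ell' \geq \ell$.

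Finally I would finish by pigeonhole. Since $L_1, \ldots, L_{d_{\max}}$ are nonempty, disjoint subsets of the $m$-element set $\{V_1, \ldots, V_m\}$ with $|L_1| = k$, we get $k + (d_{\max} - 1) \leq \sum_{d=1}^{d_{\max}} |L_d| \leq m$, hence $d_{\max} \leq m - k + 1$. A block in layer $d$ appears at level $d+1$, so every block of $W_{\{w\}}$ has appeared by level $d_{\max} + 1 \leq m - k + 2$, as claimed. I expect the only genuine obstacle to be the stabilization lemma of the previous paragraph: it is what guarantees the nonempty layers form an unbroken initial segment, and therefore what lets the pigeonhole bound on the number of layers translate into a bound on the deepest level of first appearance. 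The remaining steps are bookkeeping once the path-to-level dictionary is in place.
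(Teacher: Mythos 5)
Your proposal is correct and is essentially the paper's own argument: the paper likewise observes that, by connectivity of $H$, each level beyond the second must introduce at least one block not yet seen until all of $W_{\{w\}}$ has appeared, and then counts $k$ blocks at level two plus at least one new block per subsequent level to reach level $m-k+2$. Your BFS-layer formalization and the stabilization lemma simply make precise the paper's terse ``otherwise that vertex will not appear in any level'' step, so no new idea is involved.
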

\begin{proof}
If $k=m$ we are done. So assume $k<m$. Then the third level must
have a vertex which is not in $\{V_1,\ldots,V_m\}$. Otherwise,
that vertex will not appear in any level which is in
contradiction with the fact that $H$ is connected. By the same
reasoning, any higher level must have one new vertex until all of
them have appeared.
\end{proof}

This theorem justifies that such $T_{\{w\}}$ exists. Because by
replacing $\{w\}$ with any other vertex $V$ in $W_{\{w\}}$ and
using the same proof as the above theorem, $\{w\}$ will appear at
least once as vertex in a tree with root at $V$. 

The next lemma states that how in our case the computation of
generating function can be simplified.  Let
$\alpha_i=\alpha_i(x)=\sum_{v\in V_i}x^{f(v)}$ and set
$\alpha_{ij}=\alpha_{ij}(x)=\alpha_i(x)$ if $(V_i,V_j) \in E(H)$
and zero otherwise.
\begin{lem}\label{ours} Suppose  ($Y(G),T$) is an RFT and $f
\in \mathcal{F}^0(Y(G_0))$ with $r(F_{f,V})>0$. Then there exist
series $A_i(x)$  which are the solution of the follower set of
equations
\begin{eqnarray}\label{A}A_i(x)=\alpha_{i0}(x)+\alpha_{i1}(x)A_{1}(x) +
\alpha_{i2}(x)A_{2}(x)+ ... + \alpha_{im}(x)A_{m}(x),
\end{eqnarray} for $1\leq i\leq m$ so that the generating
function for the flow $T_f$ is
\begin{eqnarray}\label{phi}\phi_{G,f,w}(x)=\alpha_{00}(x)+\alpha_{01}(x)A_{1}(x)
+\alpha_{02}(x)A_{2}(x) + ...
+\alpha_{0m}(x)A_{m}(x).
\end{eqnarray}
Here $r(\phi_{G,f,w}) = \min \{ r(A_1), ..., r(A_m) \}\leq
r(F_{f,V}).$
\end{lem}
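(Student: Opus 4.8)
The plan is to realize each unknown $A_i(x)$ as an explicit combinatorial generating series and then to read off both (\ref{A}) and (\ref{phi}) from a first-passage decomposition of paths terminating at $w$. Concretely, for $1\le i\le m$ I would set
\[
A_i(x)=\sum x^{f(u_0)+f(u_1)+\cdots+f(u_{\ell-1})},
\]
the sum ranging over all finite paths $(u_0,u_1,\ldots,u_\ell)$ in $G$ with $u_0\in V_i$, $u_\ell=w$ and $u_j\neq w$ for $j<\ell$; that is, $A_i$ records the first passage from the class $V_i$ to $w$, weighting every vertex except the terminal $w$, in exact parallel with the convention $f^*(\gamma)=\sum_{i=0}^{n-1}f(v_i)$. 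Splitting such a path according to whether $\ell=1$ (the single edge $(u_0,w)$, contributing $\alpha_{i0}(x)$) or $\ell\ge 2$ (second vertex $u_1$ lying in some class $V_j$, $j\ge 1$) produces the recursion (\ref{A}).

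The one step that genuinely needs the RFT hypothesis is the factoring in the case $\ell\ge 2$, namely that
\[
\sum_{u_0\in V_i}\ \sum_{(u_1,\ldots,w),\,u_1\in V_j} x^{f(u_0)}\,x^{f(u_1)+\cdots}=\alpha_i(x)\,A_j(x).
\]
Here I use that all vertices of a class share a follower set, so whenever $(V_i,V_j)\in E(H)$ the bipartite block between $V_i$ and $V_j$ is complete; hence any $u_0\in V_i$ may be prepended to any first-passage path out of $V_j$, and the double sum decouples into the product $\alpha_i A_j=\alpha_{ij}A_j$. Equation (\ref{phi}) then follows from the analogous decomposition of a simple $w$-cycle $(w,v_1,\ldots,v_{n-1},w)$ by its first edge: the self-loop at $w$ contributes $\alpha_{00}(x)$, while a cycle with $v_1\in V_j$ contributes $x^{f(w)}A_j(x)=\alpha_{0j}(x)A_j(x)$, the leading factor being $x^{f(w)}=\alpha_0(x)$ since $V_0=\{w\}$ is a singleton.

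For existence I note that since $r(F_{f,V})>0$ and $f>0$ we have $\alpha_i(x)\to 0$ as $x\to 0^+$; writing (\ref{A}) as $A=b+M(x)A$ with $M=[\alpha_{ij}]_{1\le i,j\le m}$ and $b=(\alpha_{i0})$, the spectral radius of $M(x)$ is below $1$ for small $x$, so $I-M$ is invertible and the Neumann series $A=\sum_{n\ge 0}M^n b$ converges and agrees term-by-term with the nonnegative combinatorial series above. For the radius assertion I proceed in two moves. Because $\alpha_0(x)=x^{f(w)}$ is a monomial each $\alpha_{0j}$ is entire, so (\ref{phi}) shows $\phi$ converges wherever all the $A_j$ do, giving $r(\phi)\ge\min_j r(A_j)$; conversely $\phi\ge\alpha_{0j}A_j$ termwise for $j$ in $V_{\{w\}}^+$ forces $r(\phi)\le r(A_j)$ there, and the termwise inequalities $A_a\ge\alpha_{ab}A_b$ along edges of $H$ (whence $r(A_a)\le r(A_b)$), together with the connectivity of $T_{\{w\}}$ from the Theorem, propagate this to the class achieving the global minimum, yielding $r(\phi)=\min_j r(A_j)$.

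Finally, to obtain $\min_j r(A_j)\le r(F_{f,V})$ I would use that every class has an outgoing edge in $H$, so $A_i\ge\alpha_i\cdot(A_j\text{ or }1)$ with the cofactor bounded below by a positive constant on any interval away from $0$; hence convergence of $A_i$ at a point forces convergence of $\alpha_i$ there, i.e. $r(A_i)\le r(\alpha_i)$. Since the partition is finite, $r(F_{f,V})=\min_i r(\alpha_i)$, so $\min_i r(A_i)\le\min_i r(\alpha_i)=r(F_{f,V})$. The main obstacle I anticipate is the bookkeeping in the two decompositions: one must verify that the first-passage splitting is a genuine bijection onto paths (nothing counted twice, nothing omitted) and that the complete-bipartite structure is invoked exactly where the factoring is claimed, and in the radius estimate one must reach the classes not adjacent to $w$ through the connectivity propagation rather than a naive termwise domination.
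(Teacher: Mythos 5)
Your proposal is correct and follows essentially the same route as the paper: both define $A_i(x)$ as the generating series of (first-passage) paths from $V_i$ to $w$, derive (\ref{A}) by splitting off the first vertex using the common follower sets guaranteed by RFT, and obtain (\ref{phi}) by decomposing a simple $w$-cycle at its first edge. You are somewhat more explicit than the paper on points it glosses over --- the first-passage convention, the Neumann-series existence argument, and the propagation of the radius identity $r(\phi_{G,f,w})=\min_j r(A_j)$ through the connectivity of $H$ --- but these are refinements of the same argument, not a different one.
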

\begin{proof}
Let $$A_i(x)=\sum_{v\in V_i}
\sum_{\gamma=(v,...,w)}x^{f^*(\gamma)}$$ be a series on all paths
in $G$ starting at a vertex $v\in V_i$ and ending at $w$. Then
\begin{eqnarray*}
A_i(x)&=& \left(\sum_{v\in V_i}x^{f(v)}\right)\sum_{V_j \in
V_i^+}\sum_{v'\in V_j}\sum_{\gamma'
=(v',...,w)}x^{f^*(\gamma')}\\
&=&\alpha_{i0}(x)+  \alpha_{i1}(x)A_{1}(x) +
\alpha_{i2}(x)A_{2}(x)+ ... + \alpha_{im}(x)A_{m}(x).
\end{eqnarray*}
Since $V_i \cap V_j = \emptyset$ for $i \neq j$,
$F_{f,V}(x)=\sum_{i=1}^{m}\alpha_i(x).$ Hence $$r(F_{f,V})= \min\{
r(\alpha_i), ..., r(\alpha_m) \}$$ and since each $A_i(x)$ is a
rational map in variables $\alpha_1(x), ..., \alpha_m(x)$,
therefore $\min\{ r(A_1), ..., r(A_m) \}\leq r(F_{f,V}).$ Also
\begin{eqnarray*}
\phi_{G,f,w}(x)&=&\sum_{\gamma \in C(G;w)}x^{f^*(\gamma)}\\
&=&x^{f(w)} \sum_{V_i \in V^+_{w}}\sum_{v\in V_i}
\sum_{\gamma=(v,...,w)}x^{f^*(\gamma)}\\
&=&\alpha_{00}(x)+  \alpha_{01}(x)A_{1}(x)
+\alpha_{02}(x)A_{2}(x) + ... +\alpha_{0m}(x)A_{m}(x).
\end{eqnarray*}
By the way $A_i(x)$ is defined above, $r(\phi_{G,f,w})=
\min\{r(A_1(x)), ... , r(A_m(x)) \}. $
\end{proof}

Set $A(x)=(A_1(x),...,A_m(x))$ and $\alpha(x)=(\alpha_1(x), ... ,
\alpha_m(x))$ and consider them as column vectors when it applies
and let
\begin{eqnarray}\label{M} M(x)=
  \begin{pmatrix}
    \alpha_{11}(x)-1

     & \alpha_{12}(x) & ... & \alpha_{1m}(x) \\
    \alpha_{21}(x) & \alpha_{22}(x)-1 & ... & \alpha_{2m}(x) \\
    \vdots & & & \\
    \alpha_{m1}(x) & \alpha_{21}(x) & ... & \alpha_{mm}(x)-1
  \end{pmatrix}.
\end{eqnarray}
 Then statement (\ref{A}) in the conclusion of
Lemma (\ref{ours}) implies
 \begin{eqnarray}\label{5} M(x)\,
A(x) =-\alpha(x).
\end{eqnarray}
Consider (\ref{5}) as a set of equations with unknown $A(x)$.  In
the next theorem, we will find  $x$ such that $A(x)$ satisfies
(\ref{5}) and $A(x)$ is a solution of (\ref{A}), that is, we will
find $r({\phi_{G,f,w}})$. In fact for $x>0$, a solution of
(\ref{5}) is a solution of (\ref{A}) if and only if $A_i(x)>0$.

Set $\tilde{x_0}=r(F_{f,V})$ if $M(x)$ is invertible for $0\leq
x<r(F_{f,V})$, otherwise set $\tilde{x_0}=\inf \{x:\ 0\leq
x<r(F_{f,V}),\ \det M(x)=0 \}$. Since $M(0)$ is invertible and
$M(x)$ is continuous then clearly $\tilde{x_0}>0$ if and only if
$r(F_{f,V})>0$.
\begin{The}\label{det}
Suppose the hypothesis of Lemma (\ref{ours}) is satisfied and
$A(x)$, $M(x)$ and $\tilde{x_0}$ are as above. Then
$r(\phi_{G,f,w})=\tilde{x_0}$ and if $\tilde{x_0}< r(F_{f,V})$,
$\lim_{x\rightarrow \tilde{x_0}}A_i(x)=\lim_{x\rightarrow
\tilde{x_0}}\phi_{G,f,w}(x)=\infty$.
\end{The}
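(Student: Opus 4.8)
The plan is to reduce everything to the spectral behaviour of the non-negative matrix $N(x):=[\alpha_{ij}(x)]$, for which $M(x)=N(x)-I$, and to exploit the Perron--Frobenius theorem together with the positivity of all the series involved. By Lemma \ref{ours} it suffices to prove $\min_i r(A_i)=\tilde{x_0}$ and the stated blow-up, since $r(\phi_{G,f,w})=\min_i r(A_i)$. First I would record the structural facts: each entry $\alpha_{ij}(x)$ is a power series with non-negative coefficients, hence continuous and non-decreasing on $[0,r(F_{f,V}))$; the connectivity of the quotient graph $H$ (every $V_i$ in $W_{\{w\}}$ both reaches and is reached from $\{w\}$ through the tree $T_{\{w\}}$) makes $N(x)$ irreducible for $x>0$. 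Let $\lambda(x)$ denote its Perron eigenvalue, which is then real, simple, continuous and non-decreasing, with $\lambda(0)=0$.

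For the lower bound $r(\phi_{G,f,w})\ge\tilde{x_0}$ I would first identify $\tilde{x_0}$ spectrally: since $\det M(x)=\pm\det(I-N(x))$ vanishes exactly when $1$ is an eigenvalue of $N(x)$, and since all eigenvalues are dominated in modulus by $\lambda(x)$, the least $x>0$ with $\det M(x)=0$ is the least $x$ with $\lambda(x)=1$; thus $\lambda(x)<1$ throughout $[0,\tilde{x_0})$. On that interval the Neumann series $(I-N(x))^{-1}=\sum_{k\ge0}N(x)^k$ converges, so the unique solution of (\ref{5}) is $A(x)=\sum_{k\ge0}N(x)^k\,\alpha(x)$, a series of non-negative terms. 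Expanding each $N(x)^k\alpha(x)$ and comparing with the derivation in Lemma \ref{ours}, this double series is precisely the path-counting series defining $A_i(x)$; absolute convergence (all terms $\ge0$, finite total) then forces each power series $A_i$ to converge at $x$. Hence $r(A_i)\ge\tilde{x_0}$ for every $i$.

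For the reverse inequality and the blow-up I would treat the case $\tilde{x_0}<r(F_{f,V})$ (when $\tilde{x_0}=r(F_{f,V})$ the bound $\min_i r(A_i)\le r(F_{f,V})$ of Lemma \ref{ours} already closes the gap). Here $\det M(\tilde{x_0})=0$ by continuity and all $\alpha_{ij}(\tilde{x_0})$ are finite, so $\lambda(\tilde{x_0})=1$ is a simple Perron eigenvalue with strictly positive right and left eigenvectors $u,v$. Writing the spectral decomposition of $(I-N(x))^{-1}$, its dominant part as $\lambda(x)\uparrow1$ is the Perron projection $u v^{\mathsf{T}}/(1-\lambda(x))$, whence $A(x)=(I-N(x))^{-1}\alpha(x)\sim u\,(v^{\mathsf{T}}\alpha(\tilde{x_0}))/(1-\lambda(x))$. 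Because $v>0$ and $\alpha(\tilde{x_0})\ge0$ is nonzero, $v^{\mathsf{T}}\alpha(\tilde{x_0})>0$, and since every $u_i>0$ we get $A_i(x)\to\infty$ as $x\to\tilde{x_0}^-$ for all $i$. This forces $r(A_i)\le\tilde{x_0}$, so combined with the previous paragraph $r(A_i)=\tilde{x_0}=r(\phi_{G,f,w})$; finally $\phi_{G,f,w}(x)=\alpha_{00}(x)+\sum_j\alpha_{0j}(x)A_j(x)\to\infty$ because $w$ has at least one follower block, giving a positive coefficient multiplying a divergent $A_j$.

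The main obstacle I anticipate is the structural input rather than the analysis: one must verify that the matrix $N(\tilde{x_0})$ is genuinely irreducible so that $\lambda(\tilde{x_0})=1$ is simple and is the spectral radius, since only then does the clean Perron-projection asymptotics $uv^{\mathsf{T}}/(1-\lambda)$ hold and yield simultaneous divergence of all components; this is exactly where the connectedness of $H$ and the return property of the tree $T_{\{w\}}$ must be used carefully. As an alternative to the Neumann-series argument for the lower bound, one could instead invoke Cramer's rule to write $A_i=\det M_i/\det M$ as a meromorphic function, match it with the power series near $0$ by the identity theorem, and apply Pringsheim's theorem to the non-negative-coefficient series $A_i$ to rule out a singularity before $\tilde{x_0}$; I prefer the positivity argument since it avoids complex-analytic continuation across possible complex zeros of $\det M$ inside the disc.
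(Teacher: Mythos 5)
Your overall strategy is genuinely different from the paper's: where you linearize the problem spectrally, writing $M(x)=N(x)-I$ with $N(x)=[\alpha_{ij}(x)]$ and invoking Perron--Frobenius theory and the Neumann series, the paper argues convex-geometrically, tracking the images $\mathcal{M}_x(\mathcal{P})$ of the positive cone and showing that the solution $\mathcal{M}_x^{-1}(-\alpha)$ stays in $\mathcal{P}^{o}$ until $\det M$ first vanishes. Your first half is sound and in fact cleaner than the paper's: for $x<\tilde{x_0}$ the spectral radius of $N(x)$ is below $1$ (this needs no irreducibility, only that the spectral radius of a non-negative matrix is an eigenvalue and varies continuously), the Neumann series $\sum_{k\ge 0}N(x)^k\alpha(x)$ converges term by term to the path-counting series of Lemma \ref{ours}, and $r(A_i)\ge\tilde{x_0}$ follows.

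The divergence half has a genuine gap, located exactly where you flag the risk. The irreducibility of $N(x)$ does \emph{not} follow from the connectedness of $H$: the matrix $N$ is indexed by $V_1,\dots,V_m$ only, and $H$ may be connected solely through the deleted vertex $V_0=\{w\}$. The paper's own Example 3 already gives $N(x)=[0]$, and one can build an RFT with $\tilde{x_0}<r(F_{f,V})$ in which $V_1,\dots,V_m$ split into classes communicating only via $w$, so that $N$ is block-triangular or even block-diagonal. Then $\lambda(\tilde{x_0})=1$ need not be simple, the right and left eigenvectors need not be strictly positive, and the asymptotics $A(x)\sim u\,(v^{\mathsf{T}}\alpha)/(1-\lambda(x))$ fail; worse, only those components of $A$ whose class has access (in the Frobenius normal form) to the critical class are forced to diverge, while the others can stay bounded at $\tilde{x_0}$. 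What does survive without irreducibility is the weaker statement that the critical class diverges and, since every class reaches $w$ in $H$ (so the forcing vector $(\alpha_{10},\dots,\alpha_{\ell 0},\dots)$ is seen by the critical class) and $w$ reaches the critical class, that $r(\phi_{G,f,w})=\tilde{x_0}$ and $\phi_{G,f,w}(x)\to\infty$; but this requires the access argument, not the Perron projection. To complete your proof you must either impose and verify strong connectivity of $H$ restricted to $V_1,\dots,V_m$, or rework the second half with the reducible Perron--Frobenius theory and track which $A_i$ actually blow up.
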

\begin{proof}
 Let $e_i=(0,0,...,1,0,...,0)$ be the unit vector whose
$i$th entry is 1. Let ${\cal P}=\{ \sum_{i=1}^{m}t_ie_i: \ t_i\geq 0
\}$ and ${\cal N}=\{  \sum_{i=1}^{m}t_ie_i: \ t_i\leq 0 \}$. The
boundary of  ${\cal P}$ consists of $m$ sets $P_j=\{ \sum_{i=1,
i\neq j}^m t_ie_i:\ t_i\geq 0 \} $, $1\leq j\leq m$. These are $m-1$
dimensional manifolds with boundaries.  Let $\mathcal{M}_x:
v(x)\mapsto M(x)v(x)$, $\mathcal{S}^+(x)=
\mathcal{M}_x(\mathcal{P})$ and $S_j(x)= \mathcal{M}_x (P_j)$. Then
$\partial \mathcal{P}=\cup_{j=1}^{m}P_j$ and if $M(x)$ is
invertible, $\partial \mathcal{S}^+= \cup_{j=1}^m S_j(x)$.


Recall that if $W$ is a subspace of $\mathbb{R}^m$ of codimension
1, then $\mathbb{R}^m\backslash W$ consists of two unbounded
components. But when $M(x)$ is invertible,
$\partial\mathcal{S}^+(x)$ is a homeomorphic image of such a $W$
and hence $\mathbb{R}^m \backslash
\partial\mathcal{S}^+(x)$ consists of two unbounded components as well. Also for
any $x$ and $j$ we have $S_j(x)\cap \mathcal{N}^o =\{0\}$ where
$\mathcal{N}^o$ is the interior of $\mathcal{N}$. That is because
 the $j$th entry of a nonzero vector in $S_j(x)$ which is equal to $t_1\alpha_{1j}(x)+
...+t_{j-1}\alpha_{(j-1)j}(x)+t_{j+1}\alpha_{(j+1)j}(x)+...+
t_{m}\alpha_{mj}(x)$ for nonzero $t_i$'s is never negative. Hence
as far as $M(x)$ is invertible, $\mathcal{N}$ lies in one side of
$\mathbb{R}^m \backslash
\partial\mathcal{S}^+(x)$.

Note that $M(0)=-Id$ and hence it is invertible and
$\mathcal{P}=\mathcal{M}^{-1}_0(\mathcal{N})$. Also by continuity,
for small positive $x$, $M(x)$ remains invertible and in fact
$\mathcal{P}\subset \mathcal{M}^{-1}_x(\mathcal{N})$. It may happen
that we have $x$ such that $M(x)$ is not invertible and set as above
$\tilde{x_0}$ to be the smallest positive real such that $\det
M(\tilde{x_0})=0$. Since the entries of  $M(x)$ does not decrease as
$x$ increases, so $\mathcal{S}^+(x_1)\subseteq \mathcal{S}^+(x_2)$
when $x_1 < x_2 < \tilde{x_0}$. Hence $\mathcal{M}^{-1}_{x_2}
\mathcal{S}^+(x_1) \subseteq
\mathcal{M}^{-1}_{x_2}\mathcal{S}^+(x_2)$ and then
$\mathcal{M}^{-1}_x(\mathcal{S}^+(0))=
\mathcal{M}^{-1}_x(\mathcal{N}) \subseteq \mathcal{P}$ for $0<x<
\tilde{x_0}$.

In particular, $A(x)=\mathcal{M}^{-1}_{x}(-\alpha) \in
\mathcal{P}^o$ where by uniqueness of solutions $A(x)$ will be
the same as (\ref{A}).  We are done if we show that for each $i$
\begin{equation}\label{6}
  \lim_{x\rightarrow \tilde{x_0}}A_i(x)=\infty.
\end{equation}
First note that since $A_i(x)$ is increasing, the limit exists and
$\mathcal{M}_{\tilde{x_0}}(-\alpha)\in T:=
\mathcal{M}_{\tilde{x_0}}(\mathbb{R}^m)\ $. The dimension of $T$
is at most $m-1$ and we prove (\ref{6}) by claiming that $T$ dose
not intersect $\mathcal{P}^o$. Because if (\ref{6}) is not
satisfied, by our claim, the only possibility is that $T$ is a
subspace intersecting $\partial \mathcal{P}$ and containing
$\lim_{x\rightarrow \tilde{x_0}}A(x)= (\lim_{x\rightarrow
\tilde{x_0}}A_1(x), ..., \lim_{x\rightarrow \tilde{x_0}}A_m(x))$.
But then for some $i$, $A_i(\tilde{x_0})=0$ which contradicts the
fact that $A_i(\tilde{x_0})> A(x)>0$ for $\tilde{x_0}>x>0$.

Now we prove the claim. Let $\mathcal{S}^-(x)= \cup_{j=1}^m \{
\sum_{i=1, i\neq j}^m t_iM(x)(-e_i): t_i\geq 0 \}$. We have
$\mathcal{S}^-(0)= \mathcal{P}$ and by similar argument as we did
for $\mathcal{S}^+(x)$, $\mathcal{P}$ will be on one side of
$\mathbb{R}^m\backslash \mathcal{S}^-(x)$ for small $x$. Our claim
is established if we show that $\mathcal{S}^+(x)\cap
\mathcal{P}^o=\emptyset$ for $0<x<\tilde{x_0}$. If it was not the
case, then by continuity there is $x_1$, $0<x_1<\tilde{x_0}$ such
that $\mathcal{S}^+(x_1)$ intersects $\mathcal{S}^-(x_1)$ in at
least one nonzero vector. That means there is $v\neq 0$ such that
$v=\sum_{t_i\geq 0}t_iM(x_1)e_i=\sum_{t_i\geq 0}t'_iM(x_1)(-e_i)$
or equivalently $\sum_{t_i,t'_i\geq 0}(t_i+t'_i)M(x_1)e_i=0$ which
implies that $M(x)$ is nonsingular for $x_1<\tilde{x_0}$ which is
absurd.
\end{proof}
\begin{cor}\label{cor1}
Suppose $(Y(G),T)$ is an RFT and $f \in \mathcal{F}^o(Y(G))$.
Then $\phi_{G,f,w}(x)$ is $C^1$.
\end{cor}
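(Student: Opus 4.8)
The plan is to exploit the fact that $\phi_{G,f,w}$ is itself a generalized power series with nonnegative coefficients and to differentiate it term by term. By definition (\ref{phi1}) we have $\phi_{G,f,w}(x)=\sum_{\gamma\in C(G;w)}x^{f^*(\gamma)}$, and by Theorem \ref{det} its radius of convergence is $r(\phi_{G,f,w})=\tilde{x_0}$, which is positive precisely when $r(F_{f,V})>0$. Since all exponents $f^*(\gamma)$ are positive reals (and not necessarily integers), I would first record that such a series, being dominated by $\sum_\gamma x_1^{f^*(\gamma)}$ on any $[0,x_1]$ with $x_1<\tilde{x_0}$, converges uniformly on $[0,x_1]$ and hence defines a continuous function on $[0,\tilde{x_0})$; the goal is then $C^1$ regularity on the open interval $(0,\tilde{x_0})$.

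The analytic heart of the argument, and the step I expect to be the main obstacle, is justifying term-by-term differentiation of a series with arbitrary positive real exponents. Concretely, I would fix $0<x<x_1<\tilde{x_0}$ and write the formally differentiated series as $\sum_{\gamma}f^*(\gamma)\,x^{f^*(\gamma)-1}$. Using the elementary bound $t\,\theta^{t}\le C(\theta)$, valid for every $t\ge 0$ and fixed $\theta\in(0,1)$, applied with $\theta=x/x_1$, one obtains $f^*(\gamma)\,x^{f^*(\gamma)-1}\le \tfrac{C}{x}\,x_1^{f^*(\gamma)}$. As $\sum_{\gamma}x_1^{f^*(\gamma)}$ converges for $x_1<\tilde{x_0}$, the Weierstrass $M$-test yields uniform convergence of the differentiated series on compact subsets of $(0,\tilde{x_0})$. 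The standard theorem on term-by-term differentiation of a series of $C^1$ functions that converges uniformly on compacta then shows $\phi_{G,f,w}\in C^1(0,\tilde{x_0})$, with $\phi_{G,f,w}'(x)=\sum_{\gamma}f^*(\gamma)\,x^{f^*(\gamma)-1}$.

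An alternative route, consistent with the machinery already developed, starts from the closed form in Lemma \ref{ours}, namely $\phi_{G,f,w}=\alpha_{00}+\sum_{j=1}^{m}\alpha_{0j}A_j$. The same $M$-test argument applied to each $\alpha_i(x)=\sum_{v\in V_i}x^{f(v)}$ shows $\alpha_i\in C^1(0,r(F_{f,V}))$, so the entries of $M(x)$ and of $\alpha(x)$ are $C^1$; moreover $\det M(x)\neq 0$ on $(0,\tilde{x_0})$ by the definition of $\tilde{x_0}$. Cramer's rule $A_i(x)=\det M_i(x)/\det M(x)$, where $M_i(x)$ is $M(x)$ with its $i$th column replaced by $-\alpha(x)$, then expresses each $A_i$ as a quotient of $C^1$ functions with non-vanishing denominator, so $A_i\in C^1$, and $\phi_{G,f,w}$, being a finite sum of products of $C^1$ functions, is $C^1$ as well. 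I would present the first route as the main proof and keep the second as a cross-check, since it additionally establishes the $C^1$ regularity of the individual solutions $A_i$.
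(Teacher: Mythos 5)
Your proposal is correct, and your primary argument is genuinely different from the one in the paper. The paper proves the corollary exactly along the lines of your ``alternative route'': it cites the proof of Theorem~2 in \cite{P} for the fact that $F_{f,V}(x)$, and hence each $\alpha_{ij}(x)$, is $C^1$, observes that $A_i(x)$ is a rational function of the $\alpha_{ij}(x)$ with non-vanishing denominator $\det M(x)$ on $(0,\tilde{x_0})$, and concludes that $\phi_{G,f,w}$, being a polynomial in the $\alpha_{ij}$ and $A_i$, is $C^1$ --- so your cross-check is essentially the paper's proof, with the improvement that you make the ``rational map'' claim precise via Cramer's rule and that you prove the $C^1$ regularity of $F_{f,V}$ yourself rather than importing it. Your main route, by contrast, bypasses the linear-algebra representation entirely and differentiates the defining series $\sum_{\gamma\in C(G;w)}x^{f^*(\gamma)}$ term by term, with the bound $t\,\theta^{t}\le C(\theta)$ supplying the Weierstrass majorant $\tfrac{C}{x}\,x_1^{f^*(\gamma)}$; this is more elementary and self-contained, needs nothing beyond the convergence of $\phi_{G,f,w}$ at some $x_1>x$ (in particular it does not use Theorem~\ref{det} or the invertibility of $M(x)$ in any essential way), and yields an explicit formula for $\phi_{G,f,w}'$. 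What it does not give directly is the $C^1$ regularity of the individual $A_i$, which the paper's route (and your cross-check) provides and which is the form actually reused later, e.g.\ in Theorem~\ref{maximal entropy}. The only cosmetic omission is the degenerate case $r(\phi_{G,f,w})=0$, which the paper dismisses in one line and which you may as well note explicitly.
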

\begin{proof}
This is clear if $r(\phi_{G,f,w})=0$. Otherwise the proof follows
from the fact that $F_{f,V}(x)$ is $C^1$ (see proof of Theorem 2
in \cite{P}). Because then $\alpha_{ij}(x)$ is $C^1$ and since
$M(x)$ is invertible, $A_i(x)$ is a rational map on
$\alpha_{ij}(x)$'s and hence $C^1$. Now since $\phi_{G,f,w}(x)$ is
a polynomial on $\alpha_{ij}(x)$'s and $A_i(x)$'s, must be $C^1$.
\end{proof}
Still some other results may be interesting. For instance for
$1\leq i\leq m$ write (\ref{A}) as
\begin{eqnarray*}
-\alpha_{i0}(x)=\alpha_{i1}(x)A_{1}(x) + ... +
(\alpha_{ii}(x)-1)A_{i}(x)+ ... + \alpha_{im}(x)A_{m}(x),
\end{eqnarray*}
 and note that $\alpha_{ij}(x)$ and $A_i(x)$
are non-negative. This implies  at least one coefficient of
$A_i(x)$ on right, in our case $(\alpha_{ii}(x)-1)$, must be
negative or $\alpha_{ii}(x)<1$. So if $\alpha_{ii}(x)>0$ for all
$i$, then $F_{f,V}=\sum_{i=1}^{m}\alpha_i(x)$ is uniformly
bounded on the domain of $\phi_{G,f,w}(x)$. Though this last
result holds anytime
if $\tilde{x_0}< r(F_{f,V})$.
\begin{rem}\label{rem1} Let ($Y(G), T$) be a topological Markov chain
and $f\in \mathcal{F}^o(Y(G))$ with generating function
$\phi_{G,f,w}(x)$. Also suppose $h(T_f)$ the topological entropy
of the flow $T_f$ on $Y(G)$ is not infinity. Then by a result in
\cite{S}, for an arbitrary $w \in V(G)$
$$h(T_f)=\inf \left\{ h\geq 0 : \sum_{\gamma \in C(G;w)}e^{-hf^*(\gamma)}\leq 1 \right\}.$$
 Since $\sum_{\gamma \in
C(G;w)}e^{-hf^*(\gamma)}=\phi_{G,f,w}(e^{-h})$, we have
$h(T_f)=-\ln(\hat{x}_f)$ where $\hat{x}_f =\sup \{ x\geq 0 :
\phi_{G,f,w}(x)\leq 1 \}.$ Therefore, the problem of computing
$h(T_f)$ reduces to find $\hat{x}_f$. By the fact that
$\phi_{G,f,w}(x)$ is increasing, then $\hat{x}_f$ is either the
unique solution of $\phi_{G,f,w}(x)=1$ or
$\hat{x}_f=r(\phi_{G,f,w}).$
\end{rem}
\section{Applications}\label{egs}
In this section we give three examples. The first two arises  in
the study of the geodesic flows in the modular surface. The first
is a local perturbation of a TBS and our goal is to compare the
algorithm given in \cite{P} and the one deduced from Theorem
\ref{det}. For this reason, we choose exactly the same example
appearing as Example 1 in \cite{P}. We only produce
$\phi_{G,f,w}(x)$ and refer the reader to \cite{P} for the facts
behind this example and also seeing how this function is applied
to obtain an approximation for entropy of the respective
dynamical system.

The second example with some more details is not a local
perturbation of a countable TBS.
 For that example $\phi_{G,f,w}(x)$ is obtained and an estimate of
the entropy will be given. The last example illustrates a case where $r(\phi_{G,f,w})=r(F_{f,V})$.\\
\\
{\bf Example 1.} Let $V(G)=\{ 3,4,5,6,... \}$, $w=3$ and take the
set of
 forbidden edges to be
\begin{equation*}
  D=\{(3,3) , (3,4) , (3,5) , (4,3) , (5,3) \}.
\end{equation*}
This example occurs in the coding of geodesic flows on the modular
surface.  See Example 1 in \cite{P} for a brief explanation.
 There the following
formula is defined
\begin{equation}\label{p345}
\phi_{G,f,3}(x) =
\frac{x^{f(3)}(F_{f,V}(x)-x^{f(3)}-x^{f(4)}-x^{f(5)})(1-x^{f(4)}-x^{f(5)})}{1+x^{f(3)}-F_{f,V}(x)}
\end{equation}
for the case when denominator is positive.

By the above method the relation $\rho$ on $V(G)$, is
\begin{equation*}
  W_{\{ 3 \}}= \{ V_0=\{3\} , V_1 = \{ 4,5 \}, V_2= V(G) \backslash \{ 3,4,5 \}
  \}.
\end{equation*}
Hence $m=2$ and $\phi_{G , f,3}(x)= x^{f(3)}A_2(x)$. Statement
(\ref{A}) in the above theorem implies
$$\left\{
\begin{tabular}{l}
$A_1(x)= \alpha_{10}(x) +\alpha_{11}A_1 (x)+ \alpha_{12}A_2 (x)= (x^{f(4)}+x^{f(5)})(A_1(x)+A_2(x)), $ \\
$A_2(x)= \alpha_{20}(x) +\alpha_{21}A_1(x) + \alpha_{22}A_2(x) =
(\sum_{v \in V_2}x^{f(v)})(A_1(x)+A_2(x)+1),$
\end{tabular}
\right.$$ where all $\alpha_{ij}=\alpha_{ij}(x)$ are real functions
and in fact $\alpha_{11}=\alpha_{12}=x^{f(4)}+x^{f(5)}$,
$\alpha_{21}=\alpha_{22}=\sum_{i=6}^{\infty}x^{f(i)}$ and
$\alpha_{20}=x^{f(3)}$. So, $M=\begin{pmatrix}
  \alpha_{11}-1 & \alpha_{12} \\
  \alpha_{21} & \alpha_{22}-1
\end{pmatrix}$ and if $\det M\neq 0$, then $\begin{pmatrix}
  A_{1}(x) \\
  A_{2}(x)
\end{pmatrix}= M^{-1}\begin{pmatrix}
 0 \\
  -\alpha_{20}
\end{pmatrix}$.
Therefore, $A_1(x)=
\frac{\alpha_{12}\alpha_{20}}{1-\alpha_{11}-\alpha_{22}}$,
$A_2(x)=\frac{\alpha_{20}(1-\alpha_{11})}{1-\alpha_{11}-\alpha_{22}}.$
 But $1-\alpha_{11}-\alpha_{21}>0$, because $A_i$'s and $\alpha_{ij}$'s
 are positive by definition and $1-\alpha_{11}>0$ by the results following Corollary  (\ref{cor1}). That means
$\phi_{G,f,3}(x)=\frac{x^{f(3)}\alpha_{20}(x)(1-\alpha_{11}(x))}{1-\alpha_{11}(x)-\alpha_{22}(x)}$.
By evaluating $\alpha_{ij}(x)$, the formula (\ref{p345}) will be
established. Note that $\det M= 1-\alpha_{11}(x)-\alpha_{22}(x)=
1+x^{f(3)}-F_{f,V}(x)$ which is the denominator in (\ref{p345}).
\\
\\
\noindent {\bf Example 2.} Recall from \cite{K} that any
bi-infinite sequence of non-zero integers $\{ ...,v_{-1}, v_0,
v_1, ... \}$, $|v_i|\neq 1$ such that
$|\frac{1}{v}+\frac{1}{v'}|\leq \frac{1}{2}$ is realized as a
geometric code of an oriented  geodesic on the modular surface.
These codes are produced by choosing a suitable
 cross section, that is, a set which is hit infinitely many times in past and future by geodesic. So let $V(G)=\{v\in \mathbb{Z}: |v| \geq 2 \}$
and
\begin{eqnarray*}
    D_1 &=&\{(-3,-3) , (-3,-4) , (-3,-5) , (-4,-3) , (-5,-3),  \\
    && \,\, \, (3,3), (3,4) , (3,5) , (4,3), (5,3)\},\\
    D_2 &=&\{ (-v,-2), (-2,-v), (v,2), (2,v) : v\geq 2 \}.
\end{eqnarray*}
Set $D=D_1 \cup D_2$, that is, $(v,v')\in E(G)$ if and only if
$|\frac{1}{v}+\frac{1}{v'}|\leq \frac{1}{2}.$ Let $X=\{\{
...,v_{-1}, v_0, v_1, ... \}:|v_i|\geq 2,
|\frac{1}{v}+\frac{1}{v'}|\leq \frac{1}{2}\} $,
$\sigma(v_i)=v_{i+1}$ and $(X,\sigma)$ the associated system. In
fact, $(X\cup \{ ..., 1, -1, 1, -1, ... \},\sigma)$ is the
maximal 1-step countable topological Markov chain in the set of
all admissible codes known as geometric codes [3, Theorem 2.3].

By putting $w=2$, $W_{\{2\}}=\{V_0=\{2\}$, $V_1=\{3\}$, $V_2=\{
4,5\}$, $V_3=\{6,7,...\}$, $V_4=\{-2\}$, $V_5=\{-3 \}$, $V_6=\{
-4,-5\}$, $V_7=\{..., -7, -6\}\}$, one sees that $(X,\sigma)$ is
an RFT. Hence we apply our technique to give an estimation for
the entropy of $(X,\sigma)$.

Define $f(\{ ...,v_{-1}, v_0, v_1, ... \})=2\ln |cv_0|$, $c=1.25$
and let $\sigma_f$ be the special flow over $X$ with the ceiling
function $f$. (To keep the continuity of argument, we later give
some explanation  to justify choosing such an $f$.) We may assume
$f$ is defined on $V(G)$ and $f(v)=2\ln |cv|$.

 Note that
$\phi_{G,f,2}(x)= x^{f(2)}(A_4(x)+A_5(x)+A_6(x)+A_7(x))$. Also if
$\alpha_{i}=\alpha_{i}(x)$ and $A_i=A_i(x)$, then $A_i$'s are the
solution of the follower set of equations.
$$\left\{
\begin{tabular}{l}
$A_1= \alpha_{1}(A_3 +A_4+A_5+A_6+A_7)$ \\
$A_2= \alpha_{2}(A_2+ A_3 +A_4+A_5+A_6+A_7)$ \\
$A_3= \alpha_{3}(A_1+A_2+A_3 +A_4+A_5+A_6+A_7)$ \\
$A_4= \alpha_{4}(1 +A_1+A_2+A_3)$\\
$A_5= \alpha_{5}(1 +A_1+A_2+A_3+A_7)$ \\
$A_6= \alpha_{6}(1 +A_1+A_2+A_3 +A_6+A_7)$ \\
$A_7= \alpha_{7}(1 +A_1+A_2+A_3 +A_5+A_6+A_7).$ \\
\end{tabular}
\right.$$ Here $\alpha_0=\alpha_4=x^{f(2)}$,
$\alpha_1=\alpha_5=x^{f(3)}$,
$\alpha_2=\alpha_6=x^{f(4)}+x^{f(5)}$ and
$\alpha_3=\alpha_7=\sum_{v=6}^{\infty}x^{f(v)}$. Therefore, the
entropy will be $-\ln \hat{x}_f=0.8665$ where $\hat{x}_f$ is the
unique solution of  $\phi_{G,f,2}(x)=1$. (We used the computer
software Maple to perform the computations.)

Now we explain why such an $f$ was chosen above.  Let $x=\{...,
v_{-1}, v_0, v_1, ... \}$ be a geometric code for an oriented
geodesic $\gamma$. Then $w(x)=v_0(x)
-\frac{1}{v_1(x)-\frac{1}{\ldots}}$ called \textit{minus
continued fraction}, represents the attractive end point of
$\gamma$. Let $h$, the ceiling function, be the \textit{first
return time function} of oriented geodesic. This $h$ records the
time between two hits of cross sections by geodesic and is
cohomologous to $g(x)=2\ln |w(x)|$. Since  two cohomologous
ceiling functions give the same entropy for special flows on the
same base space, take $g$ to be the ceiling function over $X$.
Note that if $|v_i|=2$ and $|\frac{1}{v_i} +
\frac{1}{v_{i+1}}|\leq \frac{1}{2}$, then $v_i v_{i+1}<0$ and
$|v_{i+1}|$ can be arbitrary large. Therefore,
$$|w(x)|\leq |v_0(x)|+|\frac{1}{v_1(x)-\frac{1}{v_2(x)-\frac{1}{\ldots}}}|\leq |v_0(x)|+ \frac{1}{2}\leq |v_0(x)|+ \frac{|v_0(x)|}{4}.$$
This in turn shows that $|w(x)|\leq1.25|v_0(x)|=c|v_0(x)|.$ But,
this implies $g(x)\leq f(x)$ and hence $h(T_g)\geq h(T_f)\cong
0.8665$.

We like  to mention that our estimate improves slightly the
estimate obtained in \cite{K}.  There they consider $X'=\{
(...,v_{-1},v_0,v_1,...) :|v_i|\geq 3, v_i\in \Bbb{Z}\subset
X\}$. Then $\sigma$ is invariant on $X'$ and let
$\sigma'=\sigma_{|_X}$. Let $V'(G)=\{ v\in V:  |v_i|\geq 3
\}\subseteq V(G)$ and $D'=D_1$. It is proved in \cite{K} that the
special flow associated to $(X',\sigma')$ is a local perturbation
of a countable TBS and based on the results in \cite{P}, they
estimate the entropy  to be greater than 0.84171.

Recall that entropy of the geodesic flows in modular surface is 1
\cite{GK}, if we roughly  agree that bigger entropies of
subsystems are due to richer dynamics, hence $(X\cup \{ ..., 1,
-1, 1, -1, ... \},\sigma)$ with entropy greater than 0.8665 is a
fairly rich subsystem of
the geodesic flows in modular surface.\\
{\bf Example 3.} Let $G$ be a graph with vertex set
$V(G)=\{v_0=1, v_1=2,  ... \}$ and edge set $E(G)=\{ (v_i,v_j):\
v_i\neq v_j \ \textrm{and either}\ v_i \ \textrm{or}\ v_j\
\textrm{is}\ v_0 \}$. Let $w=\{ 1 \}$. Then $W_{\{ 1 \}}=\{
V_0=\{1\}, V_1=V(G)-\{1\} \}$. So $M(x)=[-1]$ which is invertible
for $0\leq x \leq r(F_{f,V})$. Therefore, by Theorem \ref{det},
$r(\phi_{G,f,1})=r(F_{f,V})$. Also
$\phi_{G,f,1}(x)=x^{f(1)}(\sum_{v\in V(G)-\{1\}}x^{f(v)})$.

Now let $f$ be an increasing function such that takes the value
$1$ on $v_0=1$ and value $k\in \mathbb{N}$, $k\geq 2$ exactly
$\lfloor\frac{2^{k}}{k^{2}}\rfloor$ times, where $\lfloor
r\rfloor$ denotes the integer part of $r$. Then
$r(F_{f,V})=\frac{1}{2}$ and $\phi_{G,f,1}(x)=x^{1}(\sum_{v\in
V(G)-\{1\}}x^{f(v)})=
x(\sum_{k=2}^{\infty}\lfloor\frac{2^{k}}{k^{2}}\rfloor x^k)\leq
\phi_{G,f,1}(\frac{1}{2})<0.85$. Hence $\hat{x}_f =\frac{1}{2}$.
See Remark (\ref{rem1}).
\section{An equivalent formula for generating function}\label{4}
%
In this section by a rather new approach we give an explicit
formula for $\phi_{G,f,w}(x)$ where in the special case of local
perturbation of a TBS, the formula will exactly be the one given
in \cite{P}. (See corollary \ref{thepoly}). Let $W_{\{w\}} = \{
V_0=\{w\},V_1, ..., V_m\}$ be the partition of $V(G)$ as before.
Let $\widetilde{V}_G= \{ v \in V: \exists v' \in V \ni (v,v')
\not \in E(G) \}\setminus \{w\}$. We reindex $V_i$'s so that for
some $1\leq \ell \leq m$,
\begin{eqnarray}\label{V}
\left\{
\begin{tabular}{lllll}
$\widetilde{V}_G$ &=& $V_1 \cup ... \cup V_{\ell}$;&& \\
 $\cup_{i=\ell +1}^{m-1}V_i$ &=& $\{ v \in V(G)$&:&$\!\!\!\!\!\!(v,v')\in E(G),\
v'\in V(G),$\\
&& &&$\!\!\!\!\!\!(v'',v)\not \in E(G), \exists v'' \in V(G)\}$;\\
 $ V_m$&=& $\{v\in V(G)$ &:& $\!\!\!\!\!\!(v,v')\in
E(G),(v',v)\in E(G),$\\
&& && $\!\!\!\!\!\! v'\in V(G)\}.$
\end{tabular}
\right.
\end{eqnarray}
Let $M=M(x)$ be the matrix in (\ref{M}) which is obtained from
$W_{\{ w\}}\setminus \{w\}$. Let $\ell$ be as in (\ref{V}) and
\begin{eqnarray}\label{C}
C=\begin{pmatrix}
  \alpha_{11}-1 & \alpha_{12} & \ldots & \alpha_{1\ell} \\
  \alpha_{21} & \alpha_{22}-1 & \ldots & \alpha_{2\ell} \\
  \vdots &  &  &  \\
  \alpha_{\ell 1} & \alpha_{\ell 2} & \ldots & \alpha_{\ell \ell}-1
\end{pmatrix}\end{eqnarray}
an $\ell \times \ell$ sub-matrix of $M$ on the upper left corner.
Note that $B=C+Id$ represents a weighted adjacent matrix for the
vertices of $\widetilde{V}_G$. The next lemma shows when
$\phi_{G,f,w}(x)$ is defined, then $C$ is invertible.

\begin{lem}
Suppose $x_C$ is the smallest real number such that $\det C=0$.
Then $x_C \geq r(\phi_{G,f,w})$.
\end{lem}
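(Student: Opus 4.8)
The plan is to translate the two conditions $\det M(x)=0$ and $\det C(x)=0$ into statements about spectral radii of nonnegative matrices, and then to use that a principal submatrix of a nonnegative matrix cannot have a larger spectral radius than the whole matrix. Write $A_H(x)=[\alpha_{ij}(x)]_{1\le i,j\le m}$, so that $M(x)=A_H(x)-Id$ and, by (\ref{C}), $C(x)=B(x)-Id$ where $B(x)=[\alpha_{ij}(x)]_{1\le i,j\le \ell}$ is the upper-left $\ell\times\ell$ principal submatrix of $A_H(x)$. Since $\alpha_{ij}(x)=\alpha_i(x)=\sum_{v\in V_i}x^{f(v)}$, every entry of $A_H(x)$ is nonnegative and nondecreasing in $x$ on $[0,r(F_{f,V}))$, with $A_H(0)=0$.

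Next I would identify the smallest zeros of the two determinants with the first times the corresponding Perron eigenvalues reach $1$. Denote by $\rho(\cdot)$ the spectral radius. For a nonnegative matrix, $\rho$ is itself an eigenvalue and dominates the modulus of every other eigenvalue. Hence, as long as $\rho(A_H(x))<1$, no eigenvalue of $A_H(x)$ equals $1$ and $\det M(x)=\det(A_H(x)-Id)\ne 0$; and at the first $x$ where $\rho(A_H(x))=1$, the number $1$ is an eigenvalue, so $\det M(x)=0$. Since $\rho(A_H(0))=0$ and $x\mapsto\rho(A_H(x))$ is continuous and nondecreasing, the intermediate value theorem shows that the smallest zero of $\det M$ is exactly the first $x$ with $\rho(A_H(x))=1$, namely $\tilde{x}_0$, and that $\rho(A_H(x))<1$ for all $0\le x<\tilde{x}_0$. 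By Theorem \ref{det}, $\tilde{x}_0=r(\phi_{G,f,w})$.

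Then I would invoke the comparison of spectral radii. Padding $B(x)$ with zeros to an $m\times m$ matrix $\widetilde{B}(x)$ gives $0\le \widetilde{B}(x)\le A_H(x)$ entrywise, so $\rho(B(x))=\rho(\widetilde{B}(x))\le\rho(A_H(x))$ by monotonicity of the spectral radius under the entrywise order on nonnegative matrices. Consequently, for every $x$ with $0\le x<\tilde{x}_0=r(\phi_{G,f,w})$ we get $\rho(B(x))<1$, which means $1\notin\mathrm{spec}(B(x))$ and therefore $\det C(x)=\det(B(x)-Id)\ne 0$. Thus $\det C$ has no zero below $r(\phi_{G,f,w})$, giving $x_C\ge r(\phi_{G,f,w})$.

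The step needing the most care is the reduction of the two determinant conditions to the Perron eigenvalue reaching $1$; in particular, below the first crossing one must note that $\rho(A_H(x))<1$ forces $|\lambda|<1$ for \emph{every} eigenvalue, so $1$ cannot be an eigenvalue at all, and one must confirm via the intermediate value theorem that the first crossing is genuinely at $\tilde{x}_0$. The domination $\rho(B)\le\rho(A_H)$ is standard for nonnegative matrices, and one could alternatively phrase the whole argument through $M$-matrices: $Id-A_H$ is a nonsingular $M$-matrix for $x<\tilde{x}_0$, and every principal submatrix of a nonsingular $M$-matrix is again a nonsingular $M$-matrix, so $Id-B=-C$ is nonsingular there.
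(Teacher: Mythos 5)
Your proof is correct, but it follows a genuinely different route from the paper's. You argue spectrally: writing $M=A_H-Id$ and $C=B-Id$, where $B$ is the upper-left $\ell\times\ell$ principal submatrix of the nonnegative matrix $A_H=[\alpha_{ij}]$, you locate the first zero of $\det M$ at the first parameter where the Perron root of $A_H$ reaches $1$, identify that parameter with $r(\phi_{G,f,w})$ by citing Theorem \ref{det}, and then use the standard domination $\rho(B)\le\rho(A_H)$ for principal submatrices of nonnegative matrices to conclude that $\det C$ cannot vanish below $r(\phi_{G,f,w})$. The paper instead argues combinatorially and does not use Theorem \ref{det} at all: it invokes the cofactor identity (\ref{B}) to recognize $x_C$ as the radius of convergence of the series $\sum_{n}[B(x)]^{(n)}_{V_iV_j}$, interprets each such series as a generating function over paths staying inside $\widetilde{V}_G$, and, by prepending and appending fixed connecting paths $\gamma_1,\gamma_2$ (available because $H$ is connected), dominates all of these by $\phi_{G,f,w}(x)$, so that convergence of $\phi_{G,f,w}$ at $x$ forces $x\le x_C$. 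Your version is shorter and rests only on standard Perron--Frobenius facts, but it makes the lemma logically dependent on the fairly involved Theorem \ref{det}; the paper's version is self-contained and, as a by-product, establishes the identification of the entries of $C^{-1}$ with the path series $\beta^{\widetilde{G}_w}_{v'v''}$, which is precisely what the remainder of Section \ref{4} (Theorem \ref{equv} and Corollary \ref{thepoly}) relies on. Two small points worth making explicit in your write-up: the entries $\alpha_{ij}(x)$ are finite for $x<r(F_{f,V})$ because each $\alpha_i\le F_{f,V}$, so $A_H(x)$ is a genuine finite nonnegative matrix on that range; and the fact that the spectral radius is an eigenvalue must be used in its general form for nonnegative matrices that need not be irreducible.
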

\begin{proof}
We have
\begin{eqnarray}\label{B}
\sum_{n=0}^{\infty}[B(x)]_{V_iV_j}^{(n)}=
\frac{(-1)^{e(V_i)+e(V_j)}\det ((\textrm{Id}\ -
B(x))_{e(V_j)e(V_i)})}{\det(\textrm{Id}-B(x))}
\end{eqnarray}
where $(\textrm{Id}\ - B(x))_{e(V_j)e(V_i)}$ is the sub-matrix of
$\textrm{Id} - B(x)=-C$ obtained by deleting $V_j$th row and
$V_i$th column. In fact (\ref{B}) is the same identity as (2.18)
in \cite{P}. It holds because if we let $A=[a_{ij}]$ be a $p
\times p$ matrix over $\mathbb{C}$ and if $a_{ij}^{n}$ be the
$ij$th entry of matrix $A^{(n)}$ then $\sum_{n\geq
0}a_{ij}^{(n)}z^n= \frac{(-1)^{i+j}\det
((\textrm{Id}-zA)_{ji})}{\det(\textrm{Id}-zA)}$, $1\leq i,j\leq
p$.

Note that the right hand of (\ref{B}) is by definition
$[C^{-1}]_{V_iV_j}$, the $V_iV_j$th entry of $C^{-1}$. This also
shows that the radius of convergence of the series on the left is
$x_C$. But $\sum_{n=0}^{\infty}[B(x)]_{V_iV_j}^{(n)}= \sum
x^{f^*(\gamma)}$ where the sum on the right is over the paths
$\gamma$ from all $v_i$ in $V_i$ to all $v_j$ in $V_j$ with all
possible lengths. If there is not any path from $V_i$ to $V_j$ in
$\widetilde{V}_G$ then
$\sum_{n=0}^{\infty}[B(x)]_{V_iV_j}^{(n)}=0$. Let $H$ be the
reduced graph of $G$. Since the graph $H$ with vertices $W_{\{
w\}}$ is connected so we can fix one path $\gamma_1$ from $w$ to
$v_i\in V_i$ and one path $\gamma_2$ from $v_j\in V_j$ to $w$.
Then $\gamma_1 \gamma \gamma_2 \in C(G;w)$ and $\sum_{V_i,V_j \in
\widetilde{V}_G}\sum_{\gamma \in [B(x)]_{V_iV_j}^n}
x^{f^*(\gamma_1 \gamma \gamma_2)} \leq \phi_{G,f,w}(x)$which
implies $x_C \geq r(\phi_{G,f,w})$.
\end{proof}

Recall that $\alpha_i= \alpha_i(x)=\sum_{v\in V_i}x^{f(v)}$,
 $1\leq i \leq m$. Then  $A_i$'s satisfy (\ref{A}) or equivalently
\begin{eqnarray}\label{theta}
\left\{
\begin{tabular}{l}
$A_1= \alpha_{10} +\alpha_{11}A_1 + ... +\alpha_{1m}A_m$ \\
$A_2= \alpha_{20} +\alpha_{21}A_1 + ... +\alpha_{2m}A_m $ \\
$\vdots$\\
$A_{\ell}= \alpha_{\ell 0} +\alpha_{\ell 1}A_1 + ... +\alpha_{\ell m}A_m $ \\
$A_{\ell +1}=\alpha_{\ell +1}(1+A_{1}+A_{2}+...+A_{m})$\\
$\vdots$\\
$A_m=\alpha_{m}(1+A_{1}+A_{2}+...+A_{m}). $
\end{tabular}
\right.
\end{eqnarray}
 Note that $\alpha_i(x) \not \equiv 0$ for $i\geq m_0$. Let $\zeta=\zeta(\ell,m):= \alpha_{\ell +1}+...+\alpha_{m-1}$
and $F_{f,V,V_i}=F_{f,V,V_i}(x)=\sum_{v\in
V_i^+-\widetilde{V}_G}x^{f(v)}.$ 
  Denote by $\langle.\, ,.\rangle$ the standard dot product of two
vectors and $Row_i(N)$  the $i$th row of matrix $N$. Set
\begin{eqnarray*}
\alpha_{H,f,w}(x)  &=& \sum_{V_i\subseteq \widetilde{V}_G  \atop{(w,V_i)\in E(H)}}\langle Row_{i}(C^{-1}),[-\alpha_1F_{f,V,V_1},...,-\alpha_{\ell} F_{f,V,V_{\ell}}]\rangle,\\
\alpha_{H,f,\widetilde{V}_G}(x)  &=& \sum_{V_i\subseteq \widetilde{V}_G}\langle Row_{i}(C^{-1}),[-\alpha_1F_{f,V,V_1},...,-\alpha_{\ell} F_{f,V,V_{\ell}}]\rangle,\\
\sigma_{H,f,w}(x) &=& \sum_{V_i \subseteq \widetilde{V}_G}\langle Row_{i}(C^{-1}), [-\alpha_{10},...,-\alpha_{\ell 0}]\rangle,\\
\widetilde{\phi}_{H,f,w}(x)&=&\sum_{V_i\subseteq \widetilde{V}_G
\atop{(w,V_i)\in E(H)}}\langle
Row_{i}(C^{-1}),[-\alpha_{10},...,-\alpha_{\ell
0}]\rangle+\alpha_{00}.
\end{eqnarray*}
\begin{The}\label{equv}
Let ($Y(G),T$) be an RFT and $f \in \mathcal{F}^0(Y(G_0))$. Then
for  $w\in V(G)$
\begin{eqnarray*}\phi_{G,f,w}(x)&=&\!\!\!\!x^{f(w)}(F_{f,V,w}(x)+\alpha_{H,f,w}(x))\left(\frac{\sigma_{H,f,w}(x)+1}{1-\zeta(x)-\alpha_m(x)-\alpha_{H,f,\widetilde{V}_G}(x)}\right)\\
&&+\widetilde{\phi}_{H,f,w}(x)
\end{eqnarray*} for those $0\leq x <r(F_{f,V})$
where
\begin{equation}\label{den}
1-\zeta(x)-\alpha_m(x)-\alpha_{H,f,\widetilde{V}_G}(x)>0.
\end{equation}

\end{The}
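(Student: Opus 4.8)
The plan is to exploit the three–tier structure of the reindexing (\ref{V}): the blocks $V_1,\dots,V_\ell$ making up $\widetilde{V}_G$ (incomplete follower sets), the blocks $V_{\ell+1},\dots,V_{m-1}$, and the block $V_m$, the latter two classes all having complete follower sets. The gain is that for a block $V_i$ with complete follower set one has $(V_i,V_j)\in E(H)$ for every $j$ and $(V_i,w)\in E(H)$, so the corresponding line of (\ref{theta}) collapses to $A_i=\alpha_i\,S$, where I set $S:=1+\sum_{j=1}^{m}A_j$. Summing over $\ell+1\le i\le m$ gives $\sum_{i=\ell+1}^{m}A_i=(\zeta+\alpha_m)\,S$. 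Thus the only genuinely coupled unknowns are $A_1,\dots,A_\ell$, and they are governed by the $\ell\times\ell$ matrix $C$ of (\ref{C}), which is invertible on the region where $\phi_{G,f,w}$ is defined by the preceding lemma ($x_C\ge r(\phi_{G,f,w})$).

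First I would substitute $A_i=\alpha_i S$ ($i>\ell$) into the first $\ell$ lines of (\ref{theta}). Writing the entries of $C$ as $\alpha_{ij}-\delta_{ij}$ and separating the indices $j\le\ell$ from $j>\ell$, each boundary equation becomes $\big(C\,(A_1,\dots,A_\ell)^{T}\big)_i=-\alpha_{i0}-\alpha_i S\!\!\sum_{\ell<j\le m,\,(V_i,V_j)\in E(H)}\!\!\alpha_j$, where the inner sum is $F_{f,V,V_i}$ once the possible edge into $w$ has been attributed to the terminal term $\alpha_{i0}$ rather than to a continuation. Hence $(A_1,\dots,A_\ell)^{T}=C^{-1}\big(-(\alpha_{10},\dots,\alpha_{\ell 0})^{T}-S\,(\alpha_1F_{f,V,V_1},\dots,\alpha_\ell F_{f,V,V_\ell})^{T}\big)$. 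Summing the components (dotting each row of $C^{-1}$ with the constant vector and reading off the definitions of $\sigma_{H,f,w}$ and $\alpha_{H,f,\widetilde{V}_G}$) gives $\sum_{j=1}^{\ell}A_j=\sigma_{H,f,w}+S\,\alpha_{H,f,\widetilde{V}_G}$.

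Next I would close the system for the scalar $S$. From $S=1+\sum_{j\le\ell}A_j+(\zeta+\alpha_m)S$ together with the previous identity,
\begin{equation*}
S\big(1-\zeta-\alpha_m-\alpha_{H,f,\widetilde{V}_G}\big)=1+\sigma_{H,f,w},
\end{equation*}
so $S=(1+\sigma_{H,f,w})/(1-\zeta-\alpha_m-\alpha_{H,f,\widetilde{V}_G})$, which is exactly the middle factor of the claimed formula; the hypothesis (\ref{den}) is precisely what keeps $S$ finite and positive. It remains to feed this back into $\phi_{G,f,w}=\alpha_{00}+\sum_{j=1}^{m}\alpha_{0j}A_j$ from (\ref{phi}). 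Splitting the sum at $j=\ell$, the tail $j>\ell$ contributes $x^{f(w)}S\sum_{\ell<j\le m,\,(w,V_j)\in E(H)}\alpha_j=x^{f(w)}F_{f,V,w}\,S$, while the head $j\le\ell$, after inserting the solved $A_j$ and restricting to blocks with $(w,V_j)\in E(H)$, reproduces $x^{f(w)}\alpha_{H,f,w}S$ together with the boundary-only term $\widetilde{\phi}_{H,f,w}$. Collecting the $S$-proportional parts yields $x^{f(w)}(F_{f,V,w}+\alpha_{H,f,w})S$, and the remaining terms give $\widetilde{\phi}_{H,f,w}$.

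The main obstacle is the careful accounting of the distinguished vertex $w$. One must consistently distinguish the terminal step into $w$ (recorded by $\alpha_{i0}$ and by $\alpha_{00}$) from ordinary transitions between blocks, since the elements of $C(G;w)$ are simple $w$-cycles that may not revisit $w$ internally; this is what forces the ``$+1$'' in $S$ and what cleanly separates the two pieces $x^{f(w)}(F_{f,V,w}+\alpha_{H,f,w})S$ and $\widetilde{\phi}_{H,f,w}$. In particular, I expect the delicate verification to be that each of $F_{f,V,V_i}$, $F_{f,V,w}$, $\sigma_{H,f,w}$, $\alpha_{H,f,w}$, $\alpha_{H,f,\widetilde{V}_G}$ and $\widetilde{\phi}_{H,f,w}$ matches the corresponding combinatorial sum with the correct power of $x^{f(w)}$ and the correct incidence constraints $(w,V_i)\in E(H)$, taking proper account of whether $w$ itself lies in the various follower sets. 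Finally, exactly as in the proof of Theorem \ref{det}, I would confirm that the algebraic solution of the linear system coincides with the convergent combinatorial series $A_i(x)$ (via positivity of all $A_i$ and uniqueness of solutions), which pins the range of validity to precisely the set where (\ref{den}) holds.
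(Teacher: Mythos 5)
Your proposal is correct and follows essentially the same route as the paper's proof: collapse the equations for the complete-follower-set blocks $V_{\ell+1},\dots,V_m$ of (\ref{theta}), reduce to the $\ell\times\ell$ system governed by $C$, invert, and back-substitute into $\phi_{G,f,w}=\alpha_{00}+\sum_{j}\alpha_{0j}A_j$. Your single aggregate $S=1+\sum_{j=1}^{m}A_j$ is a mildly cleaner bookkeeping than the paper's two-step cascade through $S_2=\frac{\zeta}{1-\zeta-\alpha_m}(S_1+1)$ and $A_m=\frac{\alpha_m}{1-\zeta-\alpha_m}(S_1+1)$ (indeed $S=(S_1+1)/(1-\zeta-\alpha_m)$), but the elimination, the role of $C^{-1}$, and the identification of $\sigma_{H,f,w}$, $\alpha_{H,f,\widetilde{V}_G}$ and $\widetilde{\phi}_{H,f,w}$ are the same, and you correctly flag the same delicate accounting of the terminal edge into $w$.
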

\begin{proof}
Let $S_1=S_1(x):=A_1(x)+ ... +A_{\ell}(x)$ and
$S_2=S_2(x):=A_{\ell +1}(x)+... +A_{m-1}(x)$. Then
$S_2=\zeta(1+S_1+S_2+A_m)$ or equivalently
$S_2=\frac{\zeta}{1-\zeta}(1+S_1+A_m)$. Applying $S_1$ and $S_2$
in the $m_0$th equation in (\ref{theta}),
$A_m=\frac{\alpha_m}{1-\zeta-\alpha_m}(S_1+1)$ which implies
$S_2=\frac{\zeta}{1-\zeta -\alpha_m}(S_1+1)$. By Evaluating $A_j$,
$j>\ell$ in terms of $S_1$ and $S_2$,
 and then evaluating $S_2$ and $A_m$ in terms of $S_1,
\zeta$, we will have
\begin{eqnarray*}C\begin{pmatrix}
  A_{1} \\
  \vdots \\
  A_{\ell}
\end{pmatrix}&=& \begin{pmatrix}
  -\alpha_{1(\ell+1)}A_{\ell+1}- ... -\alpha_{1(m-1)}A_{m-1}- \alpha_1 A_{m} - \alpha_{10} \\
  \vdots\\
 -\alpha_{\ell(\ell+1)}A_{\ell+1}- ... -\alpha_{\ell(m-1)}A_{m-1}- \alpha_{\ell} A_{m} - \alpha_{\ell 0}
\end{pmatrix}\\ &=&\begin{pmatrix}
  \xi_{1} \\
  \vdots\\
  \xi_{\ell}
\end{pmatrix}-\begin{pmatrix}
  \alpha_{10} \\
  \vdots \\
  \alpha_{\ell 0}
\end{pmatrix}.\end{eqnarray*}
where
\begin{eqnarray*}
   \xi_i &=& \sum_{k=\ell+1}^{m-1}-\alpha_{ik}A_k-\alpha_iA_m\\
   &=& \sum_{k=\ell+1}^{m-1}-\alpha_{ik}(\alpha_{k}+\alpha_{k}S_1
   +\alpha_{k}S_2
+\alpha_{k}A_{m})-\alpha_iA_m \\
     &=& -\sum_{k=\ell+1}^{m-1}(\alpha_{ik})\alpha_k(S_1+S_2+A_m+1)-\alpha_iA_m\\
     &=& -\frac{(S_1+1)(\sum_{k=\ell+1}^{m}\alpha_i(\alpha_{ik}))}{1-\zeta-\alpha_m}\\
     &=& -\frac{(S_1+1)(\alpha_i F_{f,V,V_i})}{1-\zeta-\alpha_m}.
\end{eqnarray*}
Therefore we have
\begin{eqnarray*}
\phi_{G,f,w}(x) &=& \alpha_{01}A_{1}+ ...+ \alpha_{0\ell}A_{\ell}+ \alpha_{0(\ell +1)}A_{\ell +1}+...+ \alpha_{0m}A_{m}+\alpha_{00}\\
&=& (\alpha_{01}A_{1}+ ...+
\alpha_{0\ell}A_{\ell})\\
&& + x^{f(w)}F_{f,V,w}(1+S_1+S_2+A_m)+\alpha_{00}.\end{eqnarray*}
This leads to
\begin{eqnarray*}\phi_{G,f,w}(x) &=&x^{f(w)}\sum_{V_i\subset
\widetilde{V}_G \atop{(w,V_i)\in E(H)}}\langle Row_{i}(C^{-1}),
[\xi_1,...,\xi_{\ell}]-[\alpha_{10},...,\alpha_{\ell 0}]\rangle\\
& & +x^{f(w)}F_{f,V,w}\left(1+S_1+\frac{\alpha_m+\zeta}{1-\zeta-\alpha_m}(1+S_1)\right)+\alpha_{00} \\
&=& x^{f(w)}\sum_{V_i \subset \widetilde{V}_G\atop{(w,V_i)\in
E(H)}}\langle Row_{i}(C^{-1}),
[\xi_1,...,\xi_{\ell}]\rangle\\
& &+x^{f(w)}\sum_{V_i\subset \widetilde{V}_G \atop{(w,V_i)\in
E(H)}}\langle Row_{i}(C^{-1}),
-[\alpha_{10},...,\alpha_{\ell 0}]\rangle\\
&&+x^{f(w)} \frac{(S_1+1)F_{f,V,w}}{1-\zeta-\alpha_m} +\alpha_{00}\\
&=&x^{f(w)}\frac{(S_1+1)}{1-\zeta-\alpha_m}\times \\
&&\left(\sum_{V_i\subseteq \widetilde{V}  \atop{(w,V_i)\in
E(H)}}\langle
Row_{i}(C^{-1}),[-\alpha_1F_{f,V,V_1},...,-\alpha_{\ell}
F_{f,V,V_{\ell}}]\rangle\right)\\ & &+x^{f(w)}
\frac{(S_1+1)F_{f,V,w}}{1-\zeta-\alpha_m}+\widetilde{\phi}_{H,f,w}(x)\\
&=&x^{f(w)}\frac{(S_1+1)}{1-\zeta-\alpha_m}\left(
\alpha_{H,,f,w}(x) +F_{f,V,w}\right)+\widetilde{\phi}_{H,f,w}(x).
\end{eqnarray*}
But
\begin{eqnarray*}
S_1&=& A_1 +... +A_{\ell}\\
&=& \sum_{i=1}^{\ell}\langle Row_{i}(C^{-1}),([\xi_1,...,\xi_{\ell}]-[\alpha_{10},...,\alpha_{\ell 0}])\rangle \\
&=&\frac{S_1+1}{1-\zeta-\alpha_{m}}\sum_{i=1}^{\ell}\langle Row_{i}(C^{-1}),[-\alpha_1F_{f,V,V_1},...,-\alpha_{\ell}F_{f,V,V_{\ell}}]\rangle\\
&&+\sigma_{H,f,w}(x)\\
&=&\frac{S_1+1}{1-\zeta-\alpha_{m}}\alpha_{H,f,\widetilde{V}_G}(x)+\sigma_{H,f,w}(x).
\end{eqnarray*}
That means
$$S_1=\frac{\alpha_{H,f,\widetilde{V}_G}(x)+\sigma_{G,f,w}(x)(1-\zeta-\alpha_{m})}{1-\zeta-\alpha_{m}-\alpha_{H,f,\widetilde{V}_G}(x)}$$
where the denominator is positive. So
\begin{eqnarray*}
\phi_{G,f,w}(x)
&=& x^{f(w)}\left(F_{f,V,w}+\alpha_{H,f,w}(x)\right)\left(\frac{S_1+1}{1-\zeta-\alpha_m}\right)+\widetilde{\phi}_{H,f,w}(x)\\
&=&x^{f(w)}(F_{f,V,w}+\alpha_{H,f,w}(x))\left(\frac{\sigma_{G,f,w}(x)+1}{1-\zeta-\alpha_m-\alpha_{H,f,\widetilde{V}_G}(x)}\right)\\
&&+\widetilde{\phi}_{H_w,f,w}(x).
\end{eqnarray*}
Note that all the arguments are reversible and the proof is
established.
\end{proof}
The following Theorem relates the condition (\ref{den}) to the
conclusion of Theorem \ref{det}.
\begin{The}
Let $M$ and $C$ be as before. Then
$$\frac{\det M}{(-1)^{m-\ell}\det C} = 1-\zeta-\alpha_m-\alpha_{H,f,\widetilde{V}_G}.$$
\end{The}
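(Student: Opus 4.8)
The plan is to exploit the block structure of $M$ induced by the splitting of the index set into $\{1,\dots,\ell\}$ (the classes inside $\widetilde V_G$) and $\{\ell+1,\dots,m\}$, and then to reduce the determinant to a rank-one computation via the Schur complement. Write
$$M=\begin{pmatrix} C & D \\ E & F\end{pmatrix},$$
where $C$ is the $\ell\times\ell$ block of (\ref{C}), $D=[\alpha_{ik}]_{1\le i\le\ell,\ \ell<k\le m}$ (no diagonal correction occurs off the main diagonal, so these are genuinely the $\alpha_{ik}$), and $E$, $F$ are the lower blocks. The key structural observation, read off from the last $m-\ell$ equations of (\ref{theta})---those of the form $A_i=\alpha_i(1+A_1+\cdots+A_m)$---is that for $i>\ell$ every entry $\alpha_{ij}$ equals $\alpha_i$. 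Hence, setting $\beta=(\alpha_{\ell+1},\dots,\alpha_m)^{T}$ and letting $\mathbf 1_p$ denote the all-ones column of length $p$, the lower blocks are the rank-one expressions $E=\beta\,\mathbf 1_\ell^{T}$ and $F=\beta\,\mathbf 1_{m-\ell}^{T}-\mathrm{Id}$.

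Since $C$ is invertible on the domain where $\phi_{G,f,w}$ is defined (by the preceding lemma, which gives $x_C\ge r(\phi_{G,f,w})$), I would apply the Schur complement identity $\det M=\det C\cdot\det(F-EC^{-1}D)$. Substituting the rank-one forms yields
$$F-EC^{-1}D=\beta\,\mathbf 1_{m-\ell}^{T}-\mathrm{Id}-\beta(\mathbf 1_\ell^{T}C^{-1}D)=\beta\,s^{T}-\mathrm{Id},\qquad s^{T}=\mathbf 1_{m-\ell}^{T}-\mathbf 1_\ell^{T}C^{-1}D.$$
This is a rank-one perturbation of $-\mathrm{Id}$, so the matrix determinant lemma gives $\det(\beta s^{T}-\mathrm{Id})=(-1)^{m-\ell}(1-s^{T}\beta)$, and therefore $\det M/\big((-1)^{m-\ell}\det C\big)=1-s^{T}\beta$.

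It then remains to identify $s^{T}\beta$ with $\zeta+\alpha_m+\alpha_{H,f,\widetilde V_G}$. Splitting $s^{T}\beta=\mathbf 1_{m-\ell}^{T}\beta-\mathbf 1_\ell^{T}C^{-1}(D\beta)$, the first term is $\sum_{k=\ell+1}^{m}\alpha_k=\zeta+\alpha_m$ by the definition of $\zeta$. For the second term I would compute the $i$th coordinate $(D\beta)_i=\sum_{k=\ell+1}^{m}\alpha_{ik}\alpha_k$; this is exactly the sum appearing in the $\xi_i$ computation inside the proof of Theorem \ref{equv}, and equals $\alpha_iF_{f,V,V_i}$ (here one uses $\alpha_{im}=\alpha_i$, valid because $V_m$ is a follower of every class). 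Thus $D\beta=(\alpha_1F_{f,V,V_1},\dots,\alpha_\ell F_{f,V,V_\ell})^{T}$, and by the definition of $\alpha_{H,f,\widetilde V_G}$,
$$-\mathbf 1_\ell^{T}C^{-1}(D\beta)=\sum_{V_i\subseteq\widetilde V_G}\langle\mathrm{Row}_i(C^{-1}),[-\alpha_1F_{f,V,V_1},\dots,-\alpha_\ell F_{f,V,V_\ell}]\rangle=\alpha_{H,f,\widetilde V_G}.$$
Combining the two terms gives $s^{T}\beta=\zeta+\alpha_m+\alpha_{H,f,\widetilde V_G}$, which is the assertion.

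The main obstacle I anticipate is purely bookkeeping: verifying that the lower blocks genuinely collapse to the claimed rank-one forms $E=\beta\,\mathbf 1_\ell^{T}$, $F=\beta\,\mathbf 1_{m-\ell}^{T}-\mathrm{Id}$, and that $(D\beta)_i=\alpha_iF_{f,V,V_i}$ on the nose. Both hinge on the special shape of the rows indexed by $\ell<i\le m$ in (\ref{theta}) and on the fact that every class sends an edge into $V_m$; once these are secured, the remaining steps (Schur complement, matrix determinant lemma, and the dot-product rewriting against $C^{-1}$) are entirely formal.
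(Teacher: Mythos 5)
Your proof is correct, but it takes a genuinely different route from the paper's. The paper proves the identity by induction on the last $m-\ell$ rows of $M$: it expands $\det M$ along the final row, applies Cramer's rule to the system $M_{e(m)e(m)}X_{(m_0)}=-b_{m_0}$ to express the resulting cofactor sums through $S_1+S_2$, and then feeds the inductive hypothesis for $M_{e(m)e(m)}$ back in. You instead observe once and for all that the rows of $M$ indexed by $i>\ell$ are constant rows $(\alpha_i,\dots,\alpha_i)$ minus the identity correction --- exactly because the classes $V_{\ell+1},\dots,V_m$ have complete follower sets in (\ref{V}), which is what makes the last $m-\ell$ equations of (\ref{theta}) take the form $A_i=\alpha_i(1+A_1+\cdots+A_m)$ --- so that the lower blocks are rank one, and then a single Schur-complement step plus the matrix determinant lemma collapses everything to $1-s^{T}\beta$. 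The identifications $\mathbf{1}_{m-\ell}^{T}\beta=\zeta+\alpha_m$ and $(D\beta)_i=\sum_{k=\ell+1}^{m}\alpha_{ik}\alpha_k=\alpha_iF_{f,V,V_i}$ (the same cancellation the paper performs inside the $\xi_i$ computation of Theorem \ref{equv}) then give exactly $s^{T}\beta=\zeta+\alpha_m+\alpha_{H,f,\widetilde{V}_G}$. What your approach buys is a non-inductive, essentially one-line determinant evaluation in which the sign $(-1)^{m-\ell}$ appears automatically from $\det(\beta s^{T}-\mathrm{Id})$ rather than being tracked through $m-\ell$ cofactor expansions; what the paper's induction buys is that it never needs to invoke the block-determinant formula and stays entirely within the Cramer's-rule framework already set up for computing the $A_i$. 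Two small points to secure in a final write-up: state explicitly that $C$ is invertible on the relevant range $0\le x<r(\phi_{G,f,w})$ (the preceding lemma gives $x_C\ge r(\phi_{G,f,w})$), so the Schur identity $\det M=\det C\cdot\det(F-EC^{-1}D)$ applies there; and note that the identity $(D\beta)_i=\alpha_iF_{f,V,V_i}$ needs only that $F_{f,V,V_i}=\sum_{v\in V_i^{+}\setminus\widetilde{V}_G}x^{f(v)}$ sums precisely over the followers of $V_i$ among $V_{\ell+1},\dots,V_m$, so the parenthetical about $V_m$ being a follower of every class is true but not actually required.
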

Note that the statement on right is the same statement appearing
in (\ref{phi}).
\begin{proof}
For $k\leq m$ let $X_{(k)}=(x_1, ..., x_k )$ and
$b_k=(\alpha_{01}, ..., \alpha_{0k})$. Recall $N_{e(i)e(j)}$ is
the sub-matrix of $N$ obtained by deleting its $i$th row and $j$th
column. Also let $N_{e(j)}$ be the matrix obtained from an $n
\times n$ matrix $N$ by replacing its $j$th column with $b_n$.
Consider $NX_{(k)}=-b_{k}$. Then by using Cramer's rule, we have
$x_{i}= \frac{\det N_{e(i)}}{\det N}$ or $\det N_{e(i)}= x_i \det
N$, $1\leq i\leq n$.

The proof is based on the induction on the last $m-\ell$ rows of
$M$. First let $M=C$. Then $\zeta, \alpha_m$ and
$\alpha_{H,f,\widetilde{V}_G}(x)$ are all zero and the conclusion
holds trivially. If we take $m=\ell +1$. Then by choosing the last
row of $M$ for computing the determinant of $M$ and then using the
Cramer's rule we have
\begin{eqnarray*}
\frac{\det M}{-\det C} &=&\frac{1}{-\det C}\left((-1)^{m+1}\alpha_m\det M_{e(m)e(1)}+\ldots\right. \\
&& + (-1)^{m+\ell}\alpha_m\det M_{e(m)e(\ell)}+(-1)^{2m}(\alpha_m-1)\det M_{e(m)e(m)}\left.\right)\\
&=& \frac{\alpha_m}{-\det C}\left((-1)^{m+1}(-1)^{m-1}\det C_{e(1)}+ \ldots \right.\\
&&+ (-1)^{m+\ell}(-1)^{m-\ell}\det C_{e(\ell)} \left. \right)+(1-\alpha_m)\\
&=& \frac{\alpha_m}{-\det C} (x_1\det C +\ldots + x_{\ell}\det C)+ (1-\alpha_m)\\
&=& -\alpha_m (\langle \mathrm{Row}_1 C^{-1},-b_{\ell}\rangle+\ldots + \langle \mathrm{Row}_{\ell} C^{-1},-b_{\ell}\rangle)\\
&& +(1-\alpha_m)\\
&=& -\sum_{i=1}^{\ell}\langle \mathrm{Row}_i C^{-1}, [-\alpha_1 F_{f,V,V_1}, \ldots , -\alpha_{\ell}F_{f,V,V_{\ell}}]\rangle +(1-\alpha_m) \\
&=& -\alpha_{H,f,\widetilde{V}_G}(x) +(1-\alpha_m). \\
\end{eqnarray*}
To emphasize the dependent of $\zeta$, $F_{f,V,V_i}$ and
$\alpha_{H,f,\widetilde{V}_G}$ on $m$, we will show them by
$\zeta^{(m)}$, $F_{f,V,V_i}^{(m)}$ and
$\alpha_{H,f,\widetilde{V}_G}^{(m)}$. Now let $m_0=\ell +k$, $k>1$
and assume
\begin{eqnarray}\label{farz}\frac{\det M}{(-1)^{k}\det C} = 1-\zeta^{(m_0)}\alpha_{m_0}-\alpha^{(m_0)}_{H,f,\widetilde{V}_G}.
\end{eqnarray}
We prove the formula for $m=m_0+1$. Again we apply Cramer's rule,
then
\begin{eqnarray*}
\frac{\det M}{(-1)^{k+1}\det C}&=&\frac{(-1)^{2m}}{(-1)^{k+1}\det
C} \left( \alpha_m \det M_{e(m)e(1)}+\cdots  \right.\\
&&+\alpha_m \det M_{e(m)e(m_0)}+(\alpha_{m}-1)\det M_{e(m)e(m)}\left.\right)\\
&=& \frac{\alpha_m \det M_{e(m)e(m)}}{(-1)^{k+1}\det C}(x_1+\cdots
+ x_{m_0})\\
&&+ \frac{\det M_{e(m)e(m)}}{(-1)^{k+1}\det C}(\alpha_m -1).
\end{eqnarray*}
Compute $x_i$, $0\leq i \leq m_0$ from the set of equations
 $M_{e(m)e(m)}X_{(m_0)}=-b_{m_0}$.

It is convenient to let $S_1=x_1+...+x_{\ell}$ and
$S_2=x_{\ell+1}+...+x_{m_0}$. The first $\ell$ equations can be
written as
\begin{eqnarray*}C\begin{pmatrix}
  x_{1} \\
  \vdots \\
  x_{\ell}
\end{pmatrix}&=& \begin{pmatrix}
  -\alpha_{01}-\alpha_{1(\ell+1)}x_{\ell+1}- ... - \alpha_{1m_0} x_{m_0}\\
  \vdots\\
 - \alpha_{0\ell}-\alpha_{\ell(\ell+1)}x_{\ell+1}- ... - \alpha_{\ell m_0} x_{m_0}
\end{pmatrix}\\ &=&\begin{pmatrix}
  -\alpha_{01}-\alpha_1 (S_1+S_2+1)F^{(m_0)}_{f,V,V_1} \\
  \vdots\\
-\alpha_{0\ell}-\alpha_{\ell}(S_1+S_2+1)F^{(m_0)}_{f,V,V_{\ell}}
\end{pmatrix}.\end{eqnarray*}
So
\begin{eqnarray}\label{one}
S_1&=&\langle
\textrm{Row}_iC^{-1},[-\alpha_{01},...,-\alpha_{0\ell
}]\rangle\nonumber\\
&& +(S_1+S_2+1)\sum_{i=1}^{\ell}\langle
\textrm{Row}_iC^{-1},[-\alpha_1F^{(m_0)}_{V_1},...,-\alpha_{\ell}F^{(m_0)}_{V_{\ell}}]\rangle.
\end{eqnarray}
The rest of equations are
\begin{eqnarray}
\left\{
\begin{tabular}{l}
$\alpha_{\ell +1}x_1+...+(\alpha_{\ell +1}-1)x_{\ell +1}+ ... +\alpha_{\ell +1}x_{m_0}=-\alpha_{\ell +1}$ \\
$\vdots$\\
$\alpha_{m_0}x_1+...+(\alpha_{m_0 }-1)x_{\ell +1}+
...+\alpha_{m_0}x_{m_0}=-\alpha_{m_0}.$
\end{tabular}
\right.
\end{eqnarray}
So
\begin{equation}\label{two}
(\zeta^{(m_0)}) (x_1 + ... +x_{\ell}) + (\alpha_{\ell +1}+ ...
+\alpha_{m_0}-1)(x_{\ell +1}+ ... +x_{m_0})= -\zeta^{(m_0)}.
\end{equation}
From (\ref{one}) and (\ref{two}) we have
\begin{eqnarray*}S_1+S_2&=&
\frac{\langle \textrm{Row}_iC^{-1}, [-\alpha_{01}, ...,
-\alpha_{0\ell }]\rangle+ \zeta^{(m_0)}
}{D}\\
&&+ \frac{ \sum_{i=1}^{\ell}\langle
\textrm{Row}_iC^{-1},[-\alpha_1 F^{(m_0)}_{f,V,V_1} , ...
-\alpha_{\ell}
F^{(m_0)}_{f,V,V_{\ell}}]\rangle}{D},\end{eqnarray*} where
$D=1-\zeta^{(m_0)}-\sum_{i=1}^{\ell}\langle
\textrm{Row}_iC^{-1},[-\alpha_1 F^{(m_0)}_{f,V,V_1}, ... ,
-\alpha_{\ell}F^{(m_0)}_{f,V,V_{\ell}}]\rangle$. Replace
$M_{e(m)e(m)}$, for $M$ in (\ref{farz}). Then
\begin{eqnarray*}
\frac{\det M}{(-1)^{k+1}\det C}&=&\frac{\det
M_{e(m)e(m)}}{(-1)^{k+1}\det
C}(\alpha_m (S_1+S_2)+(\alpha_m -1))\\
&=& -\sum_{i=1}^{\ell}\langle \textrm{Row}_iC^{-1},[-\alpha_1 F^{(m)}_{f,V,V_1},..., -\alpha_{\ell}F^{(m)}_{f,V,V_{\ell}}]\rangle\\
&&+(1-\zeta^{(m)}-\alpha_m)\\
&=&1-\zeta^{(m)}-\alpha_m-\alpha^{(m)}_{H,f,\widetilde{V}_G}(x).
\end{eqnarray*}
\end{proof}

Now we are in a position to state that our result is identical to
those of [3, $\S$2] for a local perturbation of a TBS. Therefore,
we recall some notations from \cite{P}.  Suppose $(Y(G),T)$ is a
local perturbation of a countable TMC. Fix $w \in
\widetilde{V}_G$. By our earlier notations, $\widetilde{V}_G
\subseteq \widetilde{V}$ and $\widetilde{V}_G$ is the union of
some elements of $W_{\{w\}}=\{V_0=\{w\}, V_1 , ... , V_{m } \}$
which $V_i$, $1\leq i\leq \ell$ all must have finite elements
where $\ell$ is defined as (\ref{V}). For $v \in \widetilde{V}_G$
let $A_{G,v}=\{v' \in V: (v,v') \not \in E(G) \}$.  Let
$\widetilde{G}_w$ be the sub-graph of $G$ with $V(\widetilde{G}_w)
=\widetilde{V}_G \setminus \{w\}$ and $E(\widetilde{G}_w) =\{
(v,v')\in E(G): v,v' \in V(\widetilde{G}_w) \}$. Define a matrix
$B^{\widetilde{G}_w}(x)=(b^{\widetilde{G}_w}_{vv'}(x))$, $v,v'
\in V(\widetilde{G}_w)$ where $b^{\widetilde{G}_w}_{vv'}(x)$ is
$x^{f(v)}$ when $(v,v')\in E(\widetilde{G}_w)$ and zero
otherwise. Denote by $[B^{\widetilde{G}_w}]_{vv'}^{(n)}$ the
entry corresponding to the $v$th row and $v'$th column of $n$th
power of the matrix $B^{\widetilde{G}_w}$. Let
$\beta^{\widetilde{G}_w}_{vv'}(x) = \sum^{\infty}_{n=0}
[B^{\widetilde{G}_w}(x)]_{vv'}^{(n)}$, $v,v' \in
(\widetilde{G}_w)$ and  $F_{f,V,v}(x) = F_{f,V}(x) - \sum_{v' \in
(\widetilde{V}_{G} \cap \overline{A}_{G,v})\cup
A_{G,v}}x^{f(v')}$ where $v \in \widetilde{V}_{G}$ and the bar
over a set is the complement operation. Note that $F_{f,V,v_i}=
F_{f,V,V_j}$ for all $v_i \in V_j$. Finally for $U \subseteq
\widetilde{V}_G$ let
\begin{eqnarray*}
\alpha_{U,f,w}(x) &=& \sum_{v' \in U} \sum_{v'' \in
V(\widetilde{G}_w)} \beta^{\widetilde{G}_w}_{v'v''}(x) x^{f(v'')}
F_{f,V,v''}(x)\\
\sigma_{G,f,w}(x)&=&\sum_{v' \in V(\widetilde{G}_w)} \sum_{v'' \in
V(\widetilde{G}_w) \atop{(v'' , w) \in E(G)}}
\beta^{\widetilde{G}_w}_{v'v''}(x)x^{f(v'')}\\
\phi_{\widetilde{G},f,w}(x)&=&x^{f(w)}\sum_{v' \in
V(\widetilde{G}_w)\cap \overline{A}_{G,w}} \sum_{v'' \in
V(\widetilde{G}_w)\atop{ (v'',w)\in
E(G)}}\beta^{\widetilde{G}_w}_{v'v''}(x)x^{f(v'')} + I(w,w)
\end{eqnarray*}
where $I(w,w)=\alpha_{00}.$

Let $P=Id-B^{\widetilde{G}_w}$. Clearly, if $P$ is invertible then
$C$ is invertible.
\begin{lem}\label{redumat}
 Let $1\leq i\leq \ell$. Set $k=0$ if $i=1$ and $k=|V_1| +\cdots+ |V_{i-1}|$ if
 and $2\leq i\leq \ell$. Then
$$\sum_{j=k+1}^{k+|V_i|}
\langle
Row_j(P^{-1}),[b^{\widetilde{G}_{w}}_{10},\cdots,b^{\widetilde{G}_w}_{n0}]\rangle
 = \langle Row_i(C^{-1}),[-\alpha_{10},\cdots,-\alpha_{\ell 0}]\rangle.$$
\end{lem}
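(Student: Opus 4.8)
The plan is to realize both sides of the identity as the same weighted sum over paths from the class $V_i$ to $w$ inside $\widetilde{V}_G$, and to encode this coincidence through a single intertwining relation between the vertex-level matrix $B^{\widetilde{G}_w}$ and the class-level matrix $C+\mathrm{Id}$. First I would introduce the block-summation operator $Q$, the $\ell\times n$ matrix with $Q_{iv}=1$ when $v\in V_i$ and $Q_{iv}=0$ otherwise, so that $(Qy)_i=\sum_{v\in V_i}y_v$, and write $[\alpha]:=C+\mathrm{Id}=(\alpha_{ij})_{1\le i,j\le\ell}$. The crucial structural fact, which is exactly where the RFT hypothesis enters, is the intertwining relation $Q\,B^{\widetilde{G}_w}=[\alpha]\,Q$. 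To prove it I use that for $v,v'\in\widetilde{V}_G$ the adjacency $(v,v')\in E(\widetilde{G}_w)$ depends only on the classes of $v$ and $v'$ (two vertices in one $V_i$ have identical follower and leading sets), while $b^{\widetilde{G}_w}_{vv'}$ equals $x^{f(v)}$ or $0$ according only to whether $(v,v')\in E(\widetilde{G}_w)$, hence depends on $v$ only through $f(v)$ and on $v'$ only through its class. Summing the entries of $B^{\widetilde{G}_w}y$ over $v\in V_i$ and using $\sum_{v\in V_i}x^{f(v)}=\alpha_i$ together with $\alpha_{ij}=\alpha_i$ exactly when $(V_i,V_j)\in E(H)$ collapses the inner sum and yields $(Q\,B^{\widetilde{G}_w}y)_i=\sum_{j=1}^{\ell}\alpha_{ij}(Qy)_j$ for every vector $y$ indexed by $V(\widetilde{G}_w)$, which is the asserted relation.

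Next I iterate: $Q\,(B^{\widetilde{G}_w})^{n}=[\alpha]^{n}Q$ for all $n$, and sum. In the range of $x$ where $\phi_{G,f,w}(x)$ is defined the series $\beta^{\widetilde{G}_w}_{vv'}(x)=\sum_{n\ge0}[B^{\widetilde{G}_w}(x)]^{(n)}_{vv'}$ converge, this being the regime in which the previous lemma guarantees $C$, and hence $P=\mathrm{Id}-B^{\widetilde{G}_w}$, to be invertible, so summation commutes with $Q$ and gives $Q\,P^{-1}=(\mathrm{Id}-[\alpha])^{-1}Q=(-C)^{-1}Q$, since $-C=\mathrm{Id}-[\alpha]$. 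I then apply this to the column vector $\mathbf{b}=(b^{\widetilde{G}_w}_{v0})_{v}$, where $b^{\widetilde{G}_w}_{v0}=x^{f(v)}$ if $(v,w)\in E(G)$ and $0$ otherwise. A one-line computation gives $(Q\mathbf{b})_j=\sum_{v\in V_j}x^{f(v)}$ when $(V_j,\{w\})\in E(H)$ and $0$ otherwise, that is $(Q\mathbf{b})_j=\alpha_{j0}$, so $Q\mathbf{b}=(\alpha_{10},\dots,\alpha_{\ell 0})$.

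Finally I assemble the pieces. By the chosen indexing of the rows of $P^{-1}$, in which rows $k+1,\dots,k+|V_i|$ are precisely the vertices of $V_i$, the left-hand side of the lemma equals $\sum_{v\in V_i}(P^{-1}\mathbf{b})_v=(Q\,P^{-1}\mathbf{b})_i$. Using the previous two displays, $(Q\,P^{-1}\mathbf{b})_i=((-C)^{-1}Q\mathbf{b})_i=\bigl((-C)^{-1}(\alpha_{10},\dots,\alpha_{\ell 0})\bigr)_i$. Since $C^{-1}=-(-C)^{-1}$, this is $\bigl(C^{-1}(-\alpha_{10},\dots,-\alpha_{\ell 0})\bigr)_i=\langle\mathrm{Row}_i(C^{-1}),[-\alpha_{10},\dots,-\alpha_{\ell 0}]\rangle$, which is exactly the right-hand side.

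The substantive point, and the step I expect to be the main obstacle, is establishing the intertwining $Q\,B^{\widetilde{G}_w}=[\alpha]\,Q$ cleanly: it requires verifying that inside $\widetilde{G}_w$ adjacency is genuinely constant on classes and that the final step into $w$ is carried by the boundary vector $\mathbf{b}$ rather than by $B^{\widetilde{G}_w}$ itself. The rest is bookkeeping, namely matching the vertex ordering underlying $P^{-1}$ with the class ordering underlying $C^{-1}$ and tracking the sign coming from $C=[\alpha]-\mathrm{Id}$, together with the already available convergence of the Neumann series in the domain of $\phi_{G,f,w}$.
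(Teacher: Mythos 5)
Your proof is correct and is essentially the paper's argument in operator form: the paper sums the equations of the system $P\,r=b$ over the vertices of each class $V_i$ and observes that constancy of the weighted adjacency on classes collapses it to the system $C\,s=(-\alpha_{10},\dots,-\alpha_{\ell 0})$, which is exactly your intertwining $Q\,B^{\widetilde{G}_w}=(C+\mathrm{Id})\,Q$ applied to the solution vector. The only cosmetic difference is that you route through the Neumann series to get $Q\,P^{-1}=(-C)^{-1}Q$ (which follows directly from $Q\,P=(-C)\,Q$ without any convergence hypothesis), whereas the paper implicitly invokes uniqueness of solutions of the $C$-system.
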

\begin{proof}
Suppose \begin{eqnarray}\label{Ct}C (s_1, ...,s_{\ell}) =
(-\alpha_{10}, \cdots , -\alpha_{\ell 0}),\end{eqnarray}
$P(r_{1}^1,\cdots,r_{|V_1|}^1,\cdots,r_{|V_{\ell}|}^{\ell})
=(b_{10}^{\widetilde{G}_w},\cdots,b_{n0}^{\widetilde{G}_w}).$
 It suffices to show
 $r^i_1+\cdots+r^i_{|V_{i}|}=s_i$, $1\leq i \leq \ell$.

For each $i$, $1\leq i \leq \ell$, $\alpha_{i1}s_1+ \ldots
+(\alpha_{ii}-1)s_i+\ldots + \alpha_{i\ell}s_{\ell}= -\alpha_{i0}$
and
$$\left\{\begin{tabular}{l}
 $ P_{(k+1)1}r_{1}^{1} +\cdots +  (P_{(k+1)(k+1)}-1)r_{1}^{i} +\cdots+  P_{(k+1)n}r_{|V_{\ell}|}^{\ell}= b_{(k+1)0}^{\widetilde{G}_w}$ \\
  $\vdots$\\
$P_{(k+|V_i|)1}r_{1}^{1}+\cdots+
(P_{(k+|V_i|)(k+|V_i|)}-1)r_{|V_i|}^{i}+\cdots+P_{(k+|V_i|)n}r_{|V_{\ell}|}^{\ell}$\\
$=b_{(k+|V_{i}|)0}^{\widetilde{G}_w}.$
\end{tabular}
\right.$$ By summing up all the above equations we will have
\begin{eqnarray*}
&&\!\! \!\! \!\!\!\! \!\! \!\!(P_{(k+1)1}+\cdots+P_{(k+|V_i|)1})r_1^1+\cdots +(P_{(k+1)(k+1)}+\cdots+P_{(k+|V_i|)(k+1)}-1)r_1^i\\
 &&+\cdots + (P_{(k+1)(k+|V_i|)}+\cdots+P_{(k+|V_i|)(k+|V_i|)}-1)r^i_{|V_i|}+\cdots\\
 &&+ (P_{(k+1)n}+...+P_{(k+|V_i|)n})r^{\ell}_{|V_{\ell}|}=b_{(k+1)0}^{\widetilde{G}_w}+\cdots+b_{(k+|V_{i}|)0}^{\widetilde{G}_w}\\
\end{eqnarray*}
and hence $(\alpha_{i1})r_1^1 +\cdots+
 (\alpha_{ii}-1)r_1^i+\cdots+(\alpha_{ii}-1)r^i_{|V_i|}+\cdots+(\alpha_{in})r_{|V_{\ell}|}^{\ell}=-\alpha_{i0}.$
 Therefore, $$\alpha_{i1}(r_1^1+\cdots+r^1_{|V_1|})+\cdots+ (\alpha_{ii}-1)(r_1^{i}+\cdots+
 r^{i}_{|V_i|})+\cdots+\alpha_{in}(r^{\ell}_{1}+\cdots+r^{\ell}_{|V_{\ell}|})=-\alpha_{i0}.$$
 Comparing this and (\ref{Ct}) one has $r^i_1+\cdots+r^i_{|V_{i}|}=s_i.$
\end{proof}
\begin{cor}\label{thepoly}
 Let $(Y(G),T)$ be a local perturbation of a
topological Bernoulli scheme $(Y(G_0), T)$ with a countable set
$V$ and let $f \in \mathcal{F}^0(Y(G))$ be a positive function.
Then for $w \in \widetilde{V}_G$
$$\phi_{G,f,w}(x) = \phi_{\widetilde{G},f,w}(x) + \frac{x^{f(w)}(F_{f,V,w}(x)+ \alpha_{V(\widetilde{G}_w)\cap \overline{A}_{G,w,f,w}}(x))(1+\sigma_{G,f,w}(x))}{1+\sum_{v' \in \widetilde{V}_G}x^{f(v')}-F_{f,V}(x)- \alpha_{V(\widetilde{G}_w),f,w}(x)}$$
for those $x \geq 0$ such that the denominator of last fraction
is positive [3, Theorem 2].\hfill $\Box$
\end{cor}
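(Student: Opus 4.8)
The plan is to show that, in the local-perturbation setting, the formula of Theorem \ref{equv} coincides \emph{term by term} with the formula of \cite{P} quoted in the statement. The only genuine content is the passage from the partition-level quantities appearing in Theorem \ref{equv} (built from the rows of $C^{-1}$ and indexed by the blocks $V_i$) to the vertex-level quantities of \cite{P} (built from $\beta^{\widetilde{G}_w}_{v'v''}=[\sum_n (B^{\widetilde{G}_w})^n]_{v'v''}=[P^{-1}]_{v'v''}$ and indexed by individual vertices). Lemma \ref{redumat} is precisely the bridge for this passage, so once the correct analogues of that lemma are in place the proof is a bookkeeping exercise. Throughout I would keep in mind the single notational adjustment that our $\widetilde{V}_G$ excludes $w$ whereas the $\widetilde{V}_G$ of \cite{P} includes it; this difference is exactly what produces the $x^{f(w)}$ discrepancy between the two denominators.

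First I would reconcile the denominators. Writing $\widetilde{V}_G=\{w\}\cup V_1\cup\cdots\cup V_\ell$ (in the convention that includes $w$), $F_{f,V}=x^{f(w)}+\sum_{i=1}^m\alpha_i$ and $\zeta=\sum_{i=\ell+1}^{m-1}\alpha_i$, a direct cancellation gives $1+\sum_{v'\in\widetilde{V}_G}x^{f(v')}-F_{f,V}=1-\zeta-\alpha_m$, so the denominator of \cite{P} equals $1-\zeta-\alpha_m-\alpha_{V(\widetilde{G}_w),f,w}$. It therefore matches the denominator $1-\zeta-\alpha_m-\alpha_{H,f,\widetilde{V}_G}$ of Theorem \ref{equv} as soon as $\alpha_{H,f,\widetilde{V}_G}=\alpha_{V(\widetilde{G}_w),f,w}$ is established.

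Next I would establish the four needed identities by invoking Lemma \ref{redumat} and an $F$-weighted analogue of it. Applying Lemma \ref{redumat} with the right-hand vector $[b^{\widetilde{G}_w}_{10},\ldots,b^{\widetilde{G}_w}_{n0}]$ and summing over all $V_i\subseteq\widetilde{V}_G$ collapses the block-indexed sum to the full vertex sum $\sum_{v'\in V(\widetilde{G}_w)}\sum_{(v'',w)\in E(G)}\beta^{\widetilde{G}_w}_{v'v''}x^{f(v'')}$, which is exactly $\sigma_{G,f,w}$; hence $\sigma_{H,f,w}=\sigma_{G,f,w}$. Restricting that sum to blocks with $(w,V_i)\in E(H)$ — at the vertex level the condition $v'\in\overline{A}_{G,w}$ — and adding the constant $\alpha_{00}=I(w,w)$ yields $\widetilde{\phi}_{H,f,w}=\phi_{\widetilde{G},f,w}$ (with the $x^{f(w)}$ prefactor reinstated as in the proof of Theorem \ref{equv}). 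The crucial new point is the $F$-weighted version: the proof of Lemma \ref{redumat} uses only that the right-hand vector, summed over a block $V_j$, reproduces the partition-level entry, and here $\sum_{v''\in V_j}x^{f(v'')}F_{f,V,v''}=\alpha_j F_{f,V,V_j}$ because $F_{f,V,v''}=F_{f,V,V_j}$ is constant on $V_j$. The identical block-summation argument then gives $\alpha_{H,f,\widetilde{V}_G}=\alpha_{V(\widetilde{G}_w),f,w}$, and its restriction to $(w,V_i)\in E(H)$ gives $\alpha_{H,f,w}=\alpha_{V(\widetilde{G}_w)\cap\overline{A}_{G,w},f,w}$.

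Finally I would substitute these four identifications, together with the routine definitional check that the two expressions for $F_{f,V,w}$ agree (the complement inside $V$ of $(\widetilde{V}_G\cap\overline{A}_{G,w})\cup A_{G,w}$ is exactly $V_w^+-\widetilde{V}_G$), into the formula of Theorem \ref{equv}; it then reads off as the formula of \cite{P}. The main obstacle is not any single computation but rather setting up the $F$-weighted analogue of Lemma \ref{redumat} correctly and tracking the translation of the partition-level edge conditions $(w,V_i)\in E(H)$ and $(V_i,w)\in E(H)$ into the vertex-level conditions $v'\in\overline{A}_{G,w}$ and $(v'',w)\in E(G)$; that bookkeeping, together with the $w$-in-or-out-of-$\widetilde{V}_G$ convention, is where an error would most easily creep in.
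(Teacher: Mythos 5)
Your proposal is correct and follows essentially the same route as the paper: both reduce the corollary to the four identifications $\alpha_{H,f,w}=\alpha_{V(\widetilde{G}_w)\cap\overline{A}_{G,w},f,w}$, $\alpha_{H,f,\widetilde{V}_G}=\alpha_{V(\widetilde{G}_w),f,w}$, $\sigma_{H,f,w}=\sigma_{G,f,w}$, $\widetilde{\phi}_{H,f,w}=\phi_{\widetilde{G},f,w}$ via Lemma \ref{redumat} (using $\beta^{\widetilde{G}_w}_{v'v''}=[P^{-1}]_{v'v''}$ and the constancy of $F_{f,V,v''}$ on each block $V_j$), together with the denominator identity $\zeta+\alpha_m=F_{f,V}-\sum_{v\in\widetilde{V}_G}x^{f(v)}$, and then substitute into Theorem \ref{equv}. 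The only difference is one of explicitness: the paper writes out only the $\alpha_{H,f,w}$ identification and declares the rest ``easy to see,'' whereas you spell out the weighted analogue of Lemma \ref{redumat} and all four translations.
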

\begin{proof}
Let $w \in \widetilde{V}(G)$ and $W_{\{w\}}$ be the partition for
vertices of $V(G)$. Notice that here $|V_i|<\infty$, $1\leq i
\leq \ell$ and  $F_{f,V,v_i}= F_{f,V,V_j}$ for all $v_i \in V_j$.
Also from (\ref{B}), one has
\begin{eqnarray*}\beta_{v'v''}^{\widetilde{G}_w}(x)&=&
((-1)^{e(v')+e(v'')}
\det((I-B^{\widetilde{G}_w}(x))_{e(v'')e(v')}))/(\det(I-B^{\widetilde{G}_w}(x)))\\
&=&[P^{-1}]_{v'v''}.
\end{eqnarray*}
%
Here $Row_v(N)$ is the row corresponding to vertex $v$.
Using Lemma \ref{redumat}, $\alpha_{H,f,w}(x)$ equals
 \begin{eqnarray*}
&& \sum_{V_i\subseteq \widetilde{V}_G\atop{ (w,V_i)\in
E(H)}}\langle
Row_{i}(C^{-1}),[-\alpha_1F_{f,V,V_1},...,-\alpha_{\ell}
 F_{f,V,V_{\ell}}]\rangle\\
&=& \sum_{v'\in \widetilde{V}_G\atop{ (w,v')\in E(G)}}\langle Row_{v'}(I-B^{\widetilde{G}_w})^{-1},[x^{f(v_1)}F_{f,V,v_1} ,... ,x^{f(v_n)}F_{f,V,v_n}]\rangle\\
&=& \sum_{v'\in V(\widetilde{G}_w)\cap \overline{A}_{G_w,f,w}}\sum_{v'' \in V(\widetilde{G}_w)}[(I-B^{\widetilde{G}_w})^{-1}]_{v'v''}x^{f(v'')}F_{f,V,v''}(x)\\
&=& \sum_{v'\in V(\widetilde{G}_w)\cap \overline{A}_{G_w,f,w}}\sum_{v'' \in V(\widetilde{G}_w)} \beta_{v'v''}^{\widetilde{G}_w}(x) x^{f(v'')}F_{f,V,v''}(x)\\
&=&  \alpha_{V(\widetilde{G}_w)\cap \overline{A}_{G_w,f,w}}(x).\\
\end{eqnarray*}
It is easy to see $\alpha_{V(\widetilde{G}_w),f,w}(x)$,
$\sigma_{G,f,w}(x)$ and $\phi_{\widetilde{G},f,w}(x)$ are equal
to $\alpha_{H,f,w}(x)$, $\sigma_{H,f,w}(x)$ and
$\widetilde{\phi}_{H,f,w}(x)$ respectively. Since
$\zeta+\alpha_{m}=F_{f,V}-\sum_{v\in \widetilde{V}_G}x^{f(v)}$,
the proof completes.
\end{proof}
\begin{rem}
TBS is a special case for TMC where then,
$$\phi_{G_0,f,v}(x)=\frac{x^{f(v)}}{1+x^{f(v)}-F_{f,V}(x)},$$ for
$1+x^{f(v)}-F_{f,V}(x)>0$ [3, Theorem 1].\hfill $\Box$
\end{rem}
\section{Criteria for the Existence of a Measure with Maximal
Entropy}\label{crit} This section is pretty short for there are
similar results in  \cite{P} which can be used here directly.
\begin{The}\label{maximal entropy}
Let ($Y(G),T$) be an RFT, $f\in \mathcal{F}^o(Y(G))$ and $T_f$ the
special flow constructed on $Y(G)$. The following statements are
equivalent:
\begin{itemize}
  \item[i)] $h(T_f)<\infty$ and $T_f$ has a (unique) measure with
  maximal entropy.
  \item[ii)] There exists $x_0>0$ such that $\phi_{G,f,w}(x_0)=1.$
\end{itemize}
\end{The}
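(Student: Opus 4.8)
The plan is to prove the equivalence by translating both conditions into the language of the generating function $\phi_{G,f,w}(x)$ and invoking the thermodynamic-formalism results already available in \cite{P} and \cite{S}. The key bridge is Remark \ref{rem1}, which tells us that $h(T_f)=-\ln(\hat{x}_f)$ where $\hat{x}_f=\sup\{x\geq 0:\phi_{G,f,w}(x)\leq 1\}$, and that $\hat{x}_f$ is either the unique solution of $\phi_{G,f,w}(x)=1$ or equals $r(\phi_{G,f,w})$. Since $\phi_{G,f,w}$ is continuous (indeed $C^1$ by Corollary \ref{cor1}) and strictly increasing on its domain, condition (ii) is precisely the statement that the value $1$ is attained \emph{inside} the interval of convergence, i.e. $\hat{x}_f<r(\phi_{G,f,w})$ and $\phi_{G,f,w}(\hat{x}_f)=1$.

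First I would establish the direction (ii) $\Rightarrow$ (i). Assume there exists $x_0>0$ with $\phi_{G,f,w}(x_0)=1$. By strict monotonicity this $x_0$ is unique and equals $\hat{x}_f$, so $h(T_f)=-\ln(x_0)<\infty$ immediately. The existence of a measure of maximal entropy then follows from the recurrence/positive-recurrence criterion in \cite{P}: the condition $\phi_{G,f,w}(x_0)=1$ with $x_0$ strictly inside the disk of convergence means the induced potential is \emph{positive recurrent} in the sense of Sarig's classification, which is exactly the hypothesis guaranteeing a (unique) Gibbs/equilibrium state. I would cite the relevant theorem in \cite{P} (the one giving existence of a measure with maximal entropy for positive-recurrent special flows over locally perturbed TBS) and note that the RFT hypothesis together with Theorem \ref{det} places us in its scope, since Theorem \ref{det} identifies $r(\phi_{G,f,w})=\tilde{x}_0$ and controls the boundary behavior.

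For the converse (i) $\Rightarrow$ (ii), I would argue contrapositively: suppose no $x_0>0$ solves $\phi_{G,f,w}(x)=1$. Then by Remark \ref{rem1} we must have $\hat{x}_f=r(\phi_{G,f,w})$ with $\phi_{G,f,w}(r(\phi_{G,f,w}))\leq 1$. Here Theorem \ref{det} is decisive: if $\tilde{x}_0<r(F_{f,V})$ then $\lim_{x\to\tilde{x}_0}\phi_{G,f,w}(x)=\infty$, which would force a solution of $\phi=1$ by the intermediate value theorem, contradicting our assumption; hence we are in the case $\tilde{x}_0=r(F_{f,V})$ and $\phi$ stays bounded by $1$ up to the boundary. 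This is the \emph{null-recurrent} or \emph{transient} regime, and by the corresponding result in \cite{P}/\cite{S} no measure of maximal entropy exists (or the entropy fails to be realized). Thus (i) fails, completing the contrapositive.

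The main obstacle I anticipate is the boundary analysis at $x=r(\phi_{G,f,w})$: one must carefully distinguish the subcase $\tilde{x}_0<r(F_{f,V})$ (where Theorem \ref{det} gives a clean blow-up $\phi\to\infty$, guaranteeing a crossing of the level $1$) from the subcase $\tilde{x}_0=r(F_{f,V})$ (where $M(x)$ may remain invertible up to the boundary and $\phi$ can stay finite). Making the link between ``$\phi_{G,f,w}(x_0)=1$ attained interiorly'' and the precise recurrence-type dichotomy of \cite{S} rigorous — so that existence of the maximizing measure corresponds exactly to positive recurrence and hence to an interior solution — is where the argument needs the most care, and I would lean directly on the classification theorems cited in \cite{P} and \cite{S} rather than reprove them.
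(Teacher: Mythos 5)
Your proposal is correct and follows essentially the same route as the paper: both reduce the equivalence to the criterion of \cite{S}, under which a (unique) maximal-entropy measure exists if and only if $\sum_{\gamma\in C(G;w)}e^{-h(T_f)f^*(\gamma)}=1$ (which is exactly $\phi_{G,f,w}(x_0)=1$ for $x_0=e^{-h(T_f)}$) together with $\sum_{\gamma\in C(G;w)}f^*(\gamma)e^{-h(T_f)f^*(\gamma)}<\infty$. The only step the paper makes explicit that you fold into the phrase ``positive recurrence'' is the verification of this first-moment condition, which the paper obtains from the $C^1$-smoothness of $\phi_{G,f,w}$ (Corollary \ref{cor1}); your extra boundary-case analysis via Theorem \ref{det} and Remark \ref{rem1} is consistent with, but not needed in, the paper's argument.
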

\begin{proof}
By a result in \cite{S}, the existence of a measure with maximal
entropy is guaranteed if and only if  the following conditions are
satisfied.
\begin{itemize}
  \item [1)] $\sum_{\gamma \in
  C(G,w)}f^*(\gamma)e^{-h(T_f)f^*(\gamma)}< \infty$,
  \item [2)] $\sum_{\gamma \in C(G,w)}e^{-h(T_f)f^*(\gamma)}=1$.
\end{itemize}

Condition (1) follows from the fact that $\phi_{G,f,w}(x)$ is
$C^1$ by Corollary (\ref{cor1}) and (2) means exactly that there
must be $x_0$ such that $\phi_{G,f,w}(x_0)=1$. It worths to
mention that $\phi_{G,f,w}(x)$ is an increasing function and if
ever $\phi_{G,f,w}(x_0)=1$, then $x_0$ must be unique.

%
%
\end{proof}
\begin{cor}
Suppose the hypothesis of Theorem \ref{maximal entropy}. Suppose
either
\begin{itemize}
  \item [1)] $\exists \, x>0\ \ni\ M(x)=0$, or
  \item [2)] $\lim_{x\rightarrow r(F_{f,V})^-} F_{f,V}(x)=\infty$.
\end{itemize}
Then the existence of a measure with  maximal entropy is
guaranteed.
\end{cor}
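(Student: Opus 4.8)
The plan is to use Theorem \ref{maximal entropy}, which reduces the existence of a (unique) measure with maximal entropy to the existence of a point $x_0>0$ with $\phi_{G,f,w}(x_0)=1$, and then to produce such an $x_0$ by the intermediate value theorem. Recall that $\phi_{G,f,w}$ is $C^1$ by Corollary \ref{cor1}, increasing, and satisfies $\phi_{G,f,w}(0)=0$ (since $f>0$ forces $f^*(\gamma)>0$ for every $\gamma\in C(G;w)$). Hence $1$ belongs to the range of $\phi_{G,f,w}$ on its domain $[0,\tilde{x_0})$ precisely when $\lim_{x\to\tilde{x_0}^-}\phi_{G,f,w}(x)>1$. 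So it suffices to prove that under either hypothesis $\lim_{x\to\tilde{x_0}^-}\phi_{G,f,w}(x)=\infty$; the intermediate value theorem then supplies $x_0$ with $\phi_{G,f,w}(x_0)=1$, and Theorem \ref{maximal entropy} concludes.

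Condition (1), read as $\det M(x)=0$ for some $x$ in the domain $[0,r(F_{f,V}))$, is by the definition of $\tilde{x_0}$ exactly the statement $\tilde{x_0}<r(F_{f,V})$. In that case Theorem \ref{det} gives directly $\lim_{x\to\tilde{x_0}^-}\phi_{G,f,w}(x)=\infty$ and we are done. Condition (2) splits into two cases. If $\tilde{x_0}<r(F_{f,V})$, I again invoke Theorem \ref{det} exactly as above. The only remaining case is $\tilde{x_0}=r(F_{f,V})=:R$, in which $M(x)$ is invertible on all of $[0,R)$, the domain of $\phi_{G,f,w}$ is $[0,R)$, and Theorem \ref{det} says nothing about a blow-up; this case carries the real content.

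To treat it I would propagate the divergence of $F_{f,V}$ through the graph. Since $F_{f,V}=\sum_{i=1}^m\alpha_i$ and $F_{f,V}(x)\to\infty$, some $\alpha_{i^*}(x)\to\infty$ as $x\to R^-$. Fix $x_1\in(0,R)$; by monotonicity each $\alpha_j$ and each solution $A_j$ of (\ref{A}) dominates its value at $x_1$, hence is bounded below by a positive constant on $[x_1,R)$. Writing (\ref{A}) in the homogeneous form $A_i=\alpha_i\sum_{V_j\in V_i^+}A_j$ with the convention $A_0\equiv 1$ for the return to $w$, the estimate $A_{i^*}\ge \alpha_{i^*}A_j$ for any $V_j\in V_{i^*}^+$ shows $A_{i^*}(x)\to\infty$. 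By the construction of $T_{\{w\}}$ every vertex of $H$, in particular $V_{i^*}$, is reached from $V_0=\{w\}$ by a directed path, which we may take simple, say $V_0\to V_{j_0}\to\cdots\to V_{i^*}$ with $V_{j_0}\in V_w^+$. Chaining (\ref{A}) along this path yields $A_{j_0}\ge\big(\prod\alpha_{(\cdot)}(x_1)\big)A_{i^*}\to\infty$, and therefore $\phi_{G,f,w}(x)\ge x^{f(w)}A_{j_0}(x)\ge x_1^{f(w)}A_{j_0}(x)\to\infty$, as required.

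The main obstacle is exactly this last case of condition (2): it is not covered by Theorem \ref{det}, and one must transfer the blow-up of $F_{f,V}$ along a directed path from $w$ to the divergent class $V_{i^*}$, relying on the positivity and monotonicity of the $\alpha_i$ and $A_i$ together with the connectivity guaranteed by $T_{\{w\}}$. Once $\lim_{x\to\tilde{x_0}^-}\phi_{G,f,w}(x)=\infty$ is established in all cases, the reduction of the first paragraph finishes the proof.
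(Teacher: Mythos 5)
Your proof is correct, and for condition (2) it takes a genuinely different route from the paper. Both arguments handle condition (1) identically: $\det M(x)=0$ somewhere on $[0,r(F_{f,V}))$ means $\tilde{x_0}<r(F_{f,V})$, so Theorem \ref{det} gives $\phi_{G,f,w}(x)\to\infty$ as $x\to\tilde{x_0}$, and the intermediate value theorem (using $\phi_{G,f,w}(0)=0$, continuity from Corollary \ref{cor1}, and monotonicity) produces the root $x_0$ required by Theorem \ref{maximal entropy}. For condition (2), however, the paper routes the argument through the closed formula of Theorem \ref{equv}: it splits $F_{f,V}=\sum_{i\le \ell}\alpha_i+\sum_{i>\ell}\alpha_i$ and argues that divergence of the second block forces $\zeta+\alpha_m\to\infty$, which is incompatible with the positivity constraint (\ref{den}) on the denominator, while divergence of the first block forces $\widetilde{\phi}_{H,f,w}\to\infty$; either way $\phi_{G,f,w}$ must pass through $1$. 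You instead stay entirely inside the linear system (\ref{A}): you isolate the critical subcase $\tilde{x_0}=r(F_{f,V})$, extract a class $V_{i^*}$ with $\alpha_{i^*}\to\infty$, and propagate the blow-up backwards along a directed path in $H$ from $w$ to $V_{i^*}$ using positivity and monotonicity of the $\alpha_j$ and $A_j$, landing on $\phi_{G,f,w}\ge x^{f(w)}A_{j_0}\to\infty$. Your version is more elementary and self-contained (it never invokes Theorem \ref{equv}), and it is arguably more transparent than the paper's rather terse deduction that $\sum_{i\le\ell}\alpha_i\to\infty$ implies $\widetilde{\phi}_{H,f,w}\to\infty$; what it costs is the explicit identification of which term in the closed formula is responsible for the divergence, which the paper's version provides. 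The only point worth making explicit in your write-up is that $A_j(x_1)>0$ for every $j$ requires a path from each class $V_j$ to $w$, i.e.\ the strong connectivity of $H$ already used in the construction of $T_{\{w\}}$.
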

\begin{proof}
If (1) is satisfied, the proof is immediate from Theorem
\ref{det}. Now suppose (2) is satisfied, and recall that
$F_{f,V}(x)=\sum_{i=1}^{m}\alpha_{i}(x)=\sum_{i=1}^{\ell}\alpha_{i}(x)+\sum_{i=\ell
+1 }^{m}\alpha_{i}(x)$. So $\lim_{x\rightarrow
r(F_{f,V})^-}\sum_{i=1}^{\ell}\alpha_{i}(x)=\infty$ or
$\lim_{x\rightarrow r(F_{f,V})}\sum_{i=\ell}^{m}=\infty$. Then in
Theorem \ref{equv}, $\lim_{x\rightarrow r(F_{f,V})^-}(\zeta
+\alpha_{m})=\infty$ or $\lim_{x\rightarrow
r(F_{f,V})^-}\widetilde{\phi}_{H,f,w}(x)=\infty$ which either
implies the conclusion.
\end{proof}
Note that the first two examples in section \ref{egs} have
measures with maximal entropy and the last one dose not have such
a measure.

\bibliographystyle{amsplain}

\end{document}